\documentclass[12,reqno]{amsart}
\usepackage{amsfonts}
\usepackage{amsmath} 
\usepackage{mathrsfs}
\usepackage{amssymb,mathtools}
\usepackage{color,amscd,array,graphicx,mathdots,bm,amsfonts,epsfig,amsmath}
\usepackage{amsthm}
\usepackage{enumerate}  
\usepackage{geometry}
\usepackage{comment}
\usepackage{caption}
\usepackage{subfigure}
\usepackage[T1]{fontenc}
\usepackage{blindtext}
\usepackage{cite}
\usepackage[toc,page]{appendix}
\usepackage{epstopdf} 
\usepackage{textgreek}
\parskip=0.1in

\pagenumbering{arabic}
\newtheorem{theorem}{Theorem}[section]

\newtheorem{lemma}[theorem]{Lemma}
\newtheorem{example}[theorem]{Example}

\newtheorem{remark}[theorem]{Remark}

\def\ba{\begin{eqnarray}}
	\def\ea{\end{eqnarray}}
\def\bas{\begin{eqnarray*}}
	\def\eas{\end{eqnarray*}}
\usepackage{graphicx,color,float} 

\usepackage[colorlinks=true,citecolor=blue]{hyperref}
\newtheorem{assumption}{Assumption}[section]

\usepackage{ulem}


\def\al{{\alpha}}\def\be{{\beta}}\def\de{{\delta}}
\def\ga{{\gamma}}
\def\la{{\lambda}}\def\si{{\sigma}}

\def\De{{\Delta}}\def\Ga{{\Gamma}}
\def\La{{\Lambda}}\def\Om{{\Omega}}


\def\EE{\mathbb E}\def\PP{\mathbb P}
\def\RR{\mathbb R}
\def\PP{\mathbb P}
\def\cB{{\mathcal B}}
\def\cF{{\mathcal F}}

\def\cO{{\mathcal O}}

\def\1{\mathbf 1}

\def\diag{\hbox{Diag}}
\def\d{\hbox{d}}

\def\<{\left<}\def\>{\right>}

\def\({\left(}\def\){\right)}



\numberwithin{equation}{section}

\begin{document}

\title{Long time numerical stability of implicit schemes for stochastic heat equations}
	

\author{Xiaochen Yang} 
\author{Yaozhong Hu}

\thanks{Y. Hu is supported by an NSERC Discovery grant and a centennial  fund from University of Alberta.  }

\address{School of Mathematical and Statistical Sciences, Harbin Institute of Technology, China.}
\email{\href{mailto:Yangxc\_math@163.com}{Yangxc\_math@163.com}}

\address{Department of Mathematical and Statistical Sciences, University of Alberta, Edmonton, AB, T6G 2G1, Canada.}
\email{\href{mailto:yaozhong@ualberta.ca}{yaozhong@ualberta.ca}}

\date{\today}

\begin{abstract}
	This paper studies  the long time stability  of  both  stochastic heat equations 
	on a bounded domain  driven by a correlated noise and their approximations. 
	It is   popular for researchers to prove the intermittency of the solution 
	which means that the moments of solution to  stochastic heat equation 
	usually  grow exponentially to infinite  and this   hints  that the 
	solution to stochastic heat equation is generally not stable in long time.
	However, quite surprisingly in this paper 
	we show that when the domain is bounded and when the noise 
	is  not singular in spatial variables, the system can be  long time stable
	and we also prove that we can approximate the solution by its finite dimensional
	spectral approximation  which  is also long time stable. 
	The idea is to use  eigenfunction expansion of the Laplacian on   bounded domain.   
	We also present numerical experiments which are consistent with our theoretical results.
\end{abstract}

\keywords{Stochastic heat equation,  correlated noise, implicit schemes, stability, numerical stability}

\maketitle

\normalsize

\section{Introduction} 
Let   $\mathcal{O} $ be  a bounded domain with smooth boundary $\partial \mathcal{O}$ in  
$d$-dimensional Euclidean space $\RR^d$
and let   $\left(\{ \dot W(t,x) = \frac{\partial^{d+1}}{\partial t \partial x_1 \partial x_2 \cdots \partial x_d}W(t,x)
\,, t\ge 0\,, x\in  \cO\right\}$  be a  mean zero
Gaussian noise on some probability space  $(\Om, \cF, \PP)$ with covariance 
\begin{equation}
	\EE(\dot W(t,x) \dot W(s,y))=\delta(t-s) q(x,y)\,,\quad t, s\ge 0\,,\quad x,y\in \cO\,, 
\end{equation}
where $\EE$ denotes the expectation on the probability space  $(\Om, \cF, \PP)$, 
$\delta$ is the Dirac delta function, and $q(x,y)$ is some positive and positive definite 
function. This means that  our noise is white in time variable and colored in spatial variables.
We shall study the  stability of both the solution and its    approximations  of the following 
stochastic heat equation   
\begin{equation}\label{Heatequation}
	\frac{	\partial u}{\partial  t} (t,x) = \Delta u(t,x) - \beta_0 u(t,x) +\beta_1 u(t,x) \dot W(t,x),  \quad t\ge 0\,,  ~~~~~x\in \mathcal{O}
\end{equation}
with  the  Dirichlet boundary condition that $u(t,x)=0$ for all $x\in \partial \cO$ and 
with a given  initial value $u^0(x), x\in \cO$.    Here $\beta_0$ and $\beta_1$ are certain given real numbers; the symbol   $\Delta =\sum_{i=1}^d \frac{\partial ^2}{\partial x_i ^2}$
denotes  the Laplacian;    and the above product between the random field
$u(t,x)$ and the generalized random field $\dot W(t,x)$  
will be interpreted in the It\^o-Skorohod  sense (we refer to \cite{HuHuang15,Hu19} and references therein for the definition, existence, uniqueness
and properties of the solution).     

Due to its connection to the famous Kardar-Parisi-Zhang  (KPZ)   equation
and to the Anderson localization, 
the above  equation, occasionally in the name of parabolic  Anderson model,   has been studied by many researchers and a lot of results have been obtained for  deep understanding of the solution.  Relevant to our
work, we just mention that if $\cO$ is the entire Eulcidean space $\RR^d$, then
for many  known noises, it is well-known that 
the solution exists uniquely and usually the moments
will grow exponentially to infinity when  time $t$ tends to infinity,
see   for instance \cite{HuHuang15,HuHuang17,Hu19}  and the  references therein
for further results along this direction. 

The intermittency  results  obtained (e.g. \cite{HuHuang15,HuHuang17,Hu19}) tell us that for the noises studied in the mentioned  works,   when domain is   the whole 
Euclidean space $\RR^d$ the 
solutions to \eqref{Heatequation}  is not stable. In fact, the moments of the solution will go to infinite exponentially fast (e.g. \cite{HuHuang15,HuHuang17,Hu19}  and references therein). Probably because of this reason there has been no research work concerning  the stability of equation 
\eqref{Heatequation}.   Our goal is to demonstrate that under some 
different condition on the noise structure (e.g.  on the covariance $q(x,y)$),  and when the domain $\cO$ is bounded the solution can be  
stable.  In such cases we also construct  implicit  spectral 
approximations   for the 
above equation which are  also stable.

It is natural for researchers  to be still puzzled by the above instability and stability for stochastic 
heat equation. It is natural for researchers to think that the solution of an SHE  will not be stable due to the enormous  studies on intermittency
and wonder why it is possible at all that the solution can be stable.  
To better understand this unstable and stable properties for the equation
\eqref{Heatequation} let us consider a one dimensional analogue 
(geometric Brownian motion) 
of the (infinite dimensional) equation \eqref{Heatequation},  which is given by 
\begin{equation}
	d X(t)=\mu X(t) dt +\si X(t) dB(t)\,,\quad X(0)=x\,,
	\label{e.1.3} 
\end{equation}  
where $\mu$ and $\si$ are constants, $B(t)$ is a Brownian motion,
and $dB(t)$ is It\^o differential. The solution is explicitly given by $X(t)=x \exp \left[\si B(t)+(\mu-\frac{1}{2} \si^2  ) t
\right]$.   Thus,
\[
\|X(t)\|_p^p=\EE \left[ |X(t)|^p\right] =x^p \exp\left[ (\mu +\frac{\si^2 (p-1)}{2} )pt \right]\,. 
\]
Thus, we see rather easily that 
\[
\hbox{Equation \eqref{e.1.3} is $L^p$ stable if and only if }
\quad \mu +\frac{\si^2 (p-1)}{2}<0\,.
\]
In particular, when $p=2$, 
\[
\hbox{Equation \eqref{e.1.3} is mean square stable if and only if }
\quad \mu +\frac{\si^2  }{2}<0\,.
\]
From the above condition we see that  we must require     $\mu<-\frac{\si^2  }{2}$ for the mean square stability
of the solution.  We also refer to \cite{LiHu23} for the mean square stability of
\eqref{e.1.3} for other noise, e.g., fractional Brownian motion and for nonautonomous system.

The above fact motivates us to find conditions so that \eqref{Heatequation} is stable and to study the numerical stability of some approximation  schemes. 
Our first main result     is given in Theorem \ref{exactstability},
which guarantee   the long time mean-square stability of the solution to   Equation \eqref{Heatequation}  under the condition  (the notations $\la_1$ and $\kappa$ below are given in Theorem \ref{exactstability})
\[
2(\la_1+\be_0) -\be_1^2\kappa>0.
\]
This condition is more general and easier to verify than that in 
\cite{Baojianhai2011} (see Remark 2.2 for more details). 

About the  eigenfunction expansion, let us mention that there are two operators associated with 
equation \eqref{Heatequation}. One is the eigenfunctions associated with the Dirichlet Laplace operator $\De$.
Another one is the  operator $Q$ associated with the covariance of the Gaussian noise,
defined by $Qf(x)=\int_{\cO} q(x,y) f(y) dy$  for  nice functions  $f:\cO\to \RR$ 
(which we shall not use and hence will not go to details).    
It is worth to mention that we use  the eigenfunction basis of the Dirichlet 
Laplacian instead of the operator $Q$.    Therefore, the Brownian motions obtained from the   expansion are no longer independent. This is both the  difficulty and key point in this paper. 

Regarding   the approximation  scheme with respect to   spatial variables, {  motivated by  \cite{Zhangxuping2013}}   we truncated the infinite expansion in terms of the two bases, corresponding to   the eigenvectors of the Dirichlet Laplacian  and the noise, 
respectively, in which the  dimension of the Dirichlet Laplacian  and Gaussian noise does  not need to be equal. 
A  scheme is given as follows (the notations are given in Section \ref{s.spatial})
\begin{equation*}
	\d  U_{N,M}(t)= -(\La^N + B^N)  U_{N,M}(t) \d t +\beta_1 \sum_{j=1 }^M     A^N_j U_{N,M}(t)\d B_j(t).
\end{equation*}
Under the stable condition in Theorem \ref{exactstability} and some conditions  on the noise covariance, the spectral  approximation  scheme  is long time stable
(e.g. Theorem \ref{Theoremfinite}).  In addition, the convergence of  spectral  approximation  scheme  is also obtained
(Theorem \ref{convergencetheorem}), i.e.,
\[
\sup_{0\le t\le T}   	\||u(t)-U_{N,M}(t)\||_2^2\le C(\lambda_N^{-1+\gamma}+\rho(M))\,.  
\]

To further investigate the stability of   fully temporal-spectral approximation  scheme,   we mainly consider the implicit Euler method, described by
\begin{equation*}
	U_{n+1}= U_n - \tau \La^N U_{n+1} - \tau B^N U_{n+1}+\beta_1 \sum_{j=1}^M  A^N_j U_n\Delta B_j^n,
\end{equation*}
for which  the mean-square stability is again obtained
(Theorem \ref{Heattheorem}). In addition, some other numerical methods and corresponding stability results are also discussed.

There are some other  papers on the  study of stability for   general stochastic partial differential
equations.  First, there are a lot of results in the abstract framework.
For example, \cite{Baojianhai2011} discuss the   asymptotic stability of the exact solutions. The polynomial stability of stochastic heat equations is obtained in \cite{Lv2023}.  Lang et. al. investigate in \cite{Lang2017} the numerical mean-square stability of the stochastic partial differential
equations driven by U-valued Q-L\'evy process. 
Since they are in the abstract  framework, it is hard to apply verify
their condition in our concrete equations. As we indicate earlier,  the closely relevant to our
work is \cite{Baojianhai2011} and we made some comparison in Remark 2.2. 
Let us also mention that there are also works on    the numerical almost sure exponential stability condition for  a stochastic heat equation driven by Brownian   motion 
which depends on time only and does not depend on space variables 
(see e.e. Yang et  al. \cite{Yang2022}).

The paper is organized as follows. In Section 2,  in view of the eigenfunction expansion method, we discuss the stability condition of the exact solution. In Section 3,  the stability and convergence results are  given for the spectral  approximation   scheme. In Section 4, we apply the implicit Euler method  for the temporal discretization and obtain its stability. In Section 5, we present two numerical experiments to illustrate the theoretical findings.

\section{Long time stability of the solution}\label{s.2} 

Let $L^2(\mathcal{O})$ be the space  of all square integrable functions on  bounded   domain   $\mathcal{O}$ 
of $\RR^d$ with smooth boundary $\partial D$.  The usual inner product  and norm are denoted by  $\langle \cdot,\cdot \rangle$  and $\|\cdot\|_2$. Let $H_0^1(\mathcal{O})$ and $H^2(\mathcal{O})$ be the usual Sobolev spaces with and  without compact supports. 
Let $0<\la_1\le \la_2\le \cdots$ 
be the eigenvalues of $-\De$ with  Dirichlet 
boundary condition and let $e_k\in L^2(\cO)\,, k=1, 2, \cdots$ be the 
corresponding eigenfunctions,  
namely,
\[
\De e_k(x)=-\la_k e_k(x)\,,\quad x\in \cO\,,\ \ k=1, 2, \cdots.
\]
Since $\cO$ is a bounded open domain  with smooth boundary we also know that $\{e_1, e_2, \cdots\}$ constitutes an  
orthonormal basis of $L^2(\cO)$  and   (see \cite[Theorem 6.3.1]{Davies1995}
or \cite{Davies1990})
\begin{equation}
	C_1k^{2/d}\le \la_k\le C_2k^{2/d}\, 
\end{equation}
for two positive constants $C_1, C_2$.   With  this basis we can 
correspond a function $f(x), x\in \cO$,  in $L^2(\cO)$  to  an
infinite dimensional vector $f\leftrightarrow 
(f_1, f_2, \cdots)$,   where $f_k=\langle f, e_k):=\int_{\cO} f(x) e_k(x) dx$.
Namely, $f=\sum_{k=1}^\infty f_k e_k$, with  the convergence being  in $L^2(\cO)$.   
In other word, we can write   
\[
u(t,x)=\sum_{k=1}^\infty u_k(t) e_k(x)\,,
\]
where 
\[
u_k(t)=\langle u(t, \cdot), e_k\rangle =\int_\cO u(t,x)e_k(x)d x.
\]
Thus, we can regard $u(t,x), x\in \cO$ as an  infinite 
dimensional
vector $u(t,x)\leftrightarrow (u_1(t), u_2(t), \cdots) 
$.  We shall   write   \eqref{Heatequation} as an infinite dimensional
stochastic (ordinary) differential equation along  the above spirit. 
For this reason we also expand 
\[
\dot W(t,x)=\sum_{k=1}^\infty \langle \dot W(t, \cdot)\,, e_k) e_k(x) 
\]
along the basis of   eigenfunctions of $\De$.   
To understand better this expansion, we 
denote 
\[
\dot B_k(t)=\langle \dot W(t, \cdot)\,, e_k)=\int_{\cO} \dot W(t,x)e_k(x) dx\,,\quad 
k=1, 2, \cdots
\]
which is family of a mean zero Gaussian processes of time $t$.
The covariance of this family of processes are given by
\begin{equation}
	\begin{split}
		\EE \left[ \dot B_i(t)\dot B_j(s)\right]
		=& \EE \int_{\cO^2} \dot W(t,x)e_j(x)\dot W(t,y)e_j(y) dxdy\\
		=&   \int_{\cO^2} \EE \left[ \dot W(t,x) \dot W(t,y)\right] e_i(x)e_j(y) dxdy\\
		=&   \delta(t-s) \int_{\cO^2} q(x,y) e_i(x)e_j(y) dxdy
		=\delta(t-s)  \al_{ij}\,, 
	\end{split}
\end{equation}
where and  throughout this paper 
\begin{equation}\label{e.2.2} 
	\al_{ij}:= \int_{\cO^2} q(x,y) e_i(x)e_j(y) dxdy\,. 
\end{equation}
Thus, $B_i(t)=\int_0^t \dot B_i(s)ds, i=1, 2, \cdots$ is 
a family of   Brownian motions with covariance given by $\al_{ij}$.

To write Equation  \eqref{Heatequation} in the infinite dimensional vector form, we   need to write $u\dot W$ in the infinite dimensional vector form. 
First,  we recall  from \cite[Chapter 6, Section 5, Theorem 1]{Evans2010} that the eigenfunctions    are in $C^\infty(\bar \cO) $ since $\partial \cO$ is smooth.  
Since $\bar \cO  $ is compact  all $e_i$'s  are then bounded functions   and hence  $e_ie_j\in L^2(\cO)$. Thus, we have 
\begin{equation}
	e_i(x)e_j(x)=\sum_{k=1}^\infty a_{jki} e_k(x)\,, 
\end{equation}  
where
\begin{equation}\label{e.2.5} 
	a_{jki}: =\langle e_ie_je_k\rangle=\langle e_ie_j, e_k\rangle
	=\int_{\cO} e_i(x)e_j(x)e_k(x) dx\,. 
\end{equation}
We also point out that $a_{jki} $ is permutation invariant with respect to $(k,i,j)$. 

With this notation, we have 
\begin{equation}
	\begin{split}
		u(t,x)\dot W(t,x)
		=&\sum_{i,j=1}^\infty u_i(t) e_i(x) \dot B_j(t) e_j(x)\\
		=&\sum_{i,j,k=1}^\infty a_{jki} u_i(t) \dot B_j(t) e_k(x)\,. 
	\end{split}
\end{equation}
Therefore, the equation \eqref{Heatequation}  becomes 
\begin{equation*}
	\begin{split}
		\sum_{k=1}^\infty u_k'(t) e_k(x) =&
		-\sum_{k=1}^\infty \la_k u_k (t) e_k(x)
		-\beta_1 \sum_{k=1}^\infty   u_k (t) e_k(x)\\ 
		&+\sum_{i,j,k=1}^\infty a_{jki} u_i(t) \dot B_j(t) e_k(x)\,. 
	\end{split}
\end{equation*}
Or  can write    \eqref{Heatequation}  as   following (infinite 
dimensional)  stochastic ordinary equation: 
\begin{equation*}
	\begin{split}
		u_k'(t)   =&
		- \la_k u_k (t)  
		-\beta_0    u_k (t)  
		+\be_1\sum_{i,j =1}^\infty a_{jki} u_i(t) \dot B_j(t)  \,, \quad k=1, 2, \cdots  
	\end{split}
\end{equation*}
If we denote 
\[
u(t)=(u_1(t), u_2(t), \cdots)^T\,, \quad \La =\diag(\la_1, \la_2, \cdots)\,,
\]
and
\begin{equation}
	A_j=\left( a_{jki}\right)_{1\le k,i<\infty}\,, \quad j=1,2, \cdots
	\label{e.2.5a} 
\end{equation} 
Then we can write \eqref{Heatequation}  as a more compact form: 
\begin{equation}
	du(t)=  -\La u(t) dt -\be_0 u(t) dt  +\be_1\sum_{j=1}^\infty A_j u(t) dB_j(t)\,. 
	\label{e.2.6} 
\end{equation} 
Now we specify the Hilbert space that the finite dimensional
vector for the solution belongs.  Let $H=(H_{ij})_{1\le i,j<\infty} $ be a symmetric strictly positive define infinite matrix and without ambiguity we use the same notation 
$H$ to represent the Hilbert space 
\[
H=\left\{ a=(a_1, a_2, \cdots)^T\,;\ \|a||_H^2:=a^THa=\sum_{i,j=1}^\infty a_iH_{ij}a_j<\infty\right\}\,.
\] 
We can study the stability of $u$ with respect to the (semi-)norm defined
by this matrix, namely, the stability of $\|u(t)\|_H^2=u^T(t)Hu(t)$.
From \eqref{e.2.6} and by It\^o's formula, we have
\[
\begin{split}
	d u^T(t)Hu(t)
	=&u^T(t)Hdu(t)+(du^T(t)) Hu(t)+(du^T(t)) H du(t)\\
	=&u^T(t)H\left[ -\La u(t) dt -\be_0 u(t) dt  +\be_1\sum_{j=1}^\infty A_j u(t) dB_j(t)\right]\\
	&\qquad  +\left[ -  u^T(t)\La dt -\be_0 u^T(t) dt  +\be_1\sum_{j=1}^\infty  u^T(t)A_j ^T dB_j(t)\right]Hu(t)\\
	&\qquad + \be_1^2\sum_{i,j=1}^\infty u^T(t) A_i^THA_j u(t) \al_{ij} dt\\
	=& \bigg[ -u^T(t) \left[ H \La+\La H\right] u(t) -2\be_0 u^T(t) Hu(t)\\
	&\qquad + \be_1^2 \sum_{i,j=1}^\infty \al_{ij} u^T(t) A_i^T H A_j u(t)\bigg] dt \\
	&\qquad + \be_1 \sum_{j=1}^\infty \left[ u^T(t) HA_j u(t)+ u^T(t) A_j^T Hu(t)\right] dB_j(t)  \,. 
\end{split}
\]
Taking the expectation yields
\begin{equation}
	\begin{split}
		\frac{d}{dt}  \EE \left[u^T(t)Hu(t)\right] 
		=& -\EE \left[ u^T(t) \left[ H \La+\La H\right] u(t)\right]  -2\be_0 \EE \left[ u^T(t) Hu(t)\right]\\
		&\qquad + \be_1^2 \sum_{i,j=1}^\infty \al_{ij} \EE \left[  u^T(t) A_i^T H A_j u(t)\right]   \,.
	\end{split}
	\label{e.2.7} 
\end{equation} 
To concisely present our idea let us consider the case that 
$H=I$ is the identity matrix, namely, $H_{ij}=\de_{ij}$ for all $i,j=1, 2,  \cdots$,
where $\de_{ij}=1$ when $i=j$ and $\de_{ij}=0$ when $i\not=j$ is the Kronecker symbol.   Then 
\[
\|u(t)\|_H^2=  u^T(t)Hu(t)  =  \sum_{j=1}^\infty  u_j^2 (t)=\int_{\cO}  |u(t,x)|^2 dx=\|u(t)\|_2^2\ 
\]
is the usual $L^2$ norm.  
In this case, we have
\begin{equation}
	\begin{split}
		u^T(t) \left[  H \La+\La H\right] u(t) 
		&=2\sum_{j=1}^\infty\la_j  u_j^2(t) \ge 2\la_1
		\sum_{j=1}^\infty   u_j^2(t)=2\la_1 \|u(t)\|_2^2\,.  
	\end{split}\label{e.2.8} 
\end{equation}
Using the fact that 
\[
\sum_{i=1}^\infty e_i(x)e_i(y)=\delta(x-y)\,,
\]
where $\delta$ is the Dirac delta function 
whose justification can always be done via approximation,   and by the definition of $\al_{ij}$ (i.e.  \eqref{e.2.2}) and $A_j$ (i.e. equations \eqref{e.2.5} and \eqref{e.2.5a}), we  have 
\begin{equation}
	\begin{split}
		\bigg( \sum_{i,j=1}^\infty &\al _{ij}    A_i^T    A_j  \bigg)_{k,m}
		=  \sum_{i,j,\ell=1}^\infty \al_{ij}     a_{i\ell k}  a_{j\ell m} \\
		=& \sum_{i,j,\ell=1}^\infty   \int_{\cO^4} q(x,y) e_i(x) e_j(y) 
		e_i(\xi) e_\ell (\xi) e_k(\xi) e_j(\eta) e_ \ell(\eta) e_m(\eta) dxdyd\xi d\eta\\ 
		=&    \int_{\cO^4} q(x,y) \de(x-\xi) \de(y-\eta) 
		\delta(\xi-\eta)
		e_k(\xi)     e_m(\eta) dxdyd\xi d\eta\\
		=& \int_{\cO} q(\xi,\xi)   e_k(\xi)     e_m(\xi)  d\xi\,. 
	\end{split}\label{e.2.9a} 
\end{equation}
Therefore, we have 
\begin{equation}\label{e.2.9} 
	\begin{split}
		\sum_{i,j=1}^\infty \al_{ij}  \left[  u^T(t) A_i^T   A_j u(t)\right]  
		=& \sum_{k, m=1}^\infty  \int_{\cO} q(\xi,\xi)   u_k(t)
		e_k(\xi)     u_m(t) e_m(\xi)  d\xi\\
		=& \   \int_{\cO} q(\xi,\xi)  \left( \sum_{k =1}^\infty      u_k(t)
		e_k(\xi)    \right)^2   d\xi =   \int_{\cO} q(\xi,\xi)   u ^2(t, \xi)    d\xi\\
		\le& \kappa   \int_{\cO}    u ^2(t, \xi)    d\xi  \,, 
	\end{split}
\end{equation}
where
\begin{equation}
	\kappa =\sup_{\xi \in \cO} |q(\xi, \xi)|\,. \label{e.2.10} 
\end{equation}
Denote 
\begin{equation}
	\||u(t)\||_2^2=\EE \|u(t)\|_2^2\,.\label{e.2.11} 
\end{equation}
Plugging  \eqref{e.2.8}-\eqref{e.2.9} into \eqref{e.2.7}  yields 
\begin{equation}
	\frac{d}{dt} \||u(t)\||_2^2 \le -2(\la_1+\beta_0) \||u(t)\||_2^2 
	+ \be_1^2 \kappa \||u(t)\||_2^2 \,.\label{e.2.13a}
\end{equation}
An application of the Gronwall inequality yields
\begin{equation}
	\||u(t)\||_2^2 \le \exp\left(  -[2(\la_1+\beta_0)- \be_1^2  \kappa]  \right)  \||u(0)\||_2^2 \,.\label{e.2.13aa}
\end{equation} 
We can summarize the above result as
\begin{theorem}\label{exactstability} 
	Let $\kappa$ be given by  \eqref{e.2.10}.  If  
	\begin{equation}
		2(\la_1+\be_0) -\be_1^2\kappa>0\,,\label{e.2.12} 
	\end{equation}
	then the system \eqref{Heatequation} is stable in the mean square sense, namely, 
	$\lim_{t\to \infty} \||u(t)\||_2=0$.   
\end{theorem}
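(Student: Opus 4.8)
The plan is to realize the stochastic heat equation \eqref{Heatequation} as the infinite-dimensional It\^o equation \eqref{e.2.6} obtained by projecting onto the Dirichlet eigenbasis $\{e_k\}$, and then to run a direct energy estimate on the squared $L^2$ norm. Concretely, I would take $H=I$ so that $\||u(t)\||_2^2$ is the mean $L^2$ energy, apply It\^o's formula to $u^T(t)u(t)$, and take expectations; since the martingale part has mean zero this produces the energy identity \eqref{e.2.7}, in which only three deterministic terms survive: the dissipation coming from $\La$, the zeroth-order damping from $\be_0$, and the It\^o correction carrying the noise covariances $\al_{ij}$.

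The second step is to estimate these three terms separately. For the dissipative term I would use the ordering $0<\la_1\le\la_2\le\cdots$ of the eigenvalues to obtain the spectral-gap bound \eqref{e.2.8}, i.e. $u^T(t)[H\La+\La H]u(t)\ge 2\la_1\|u(t)\|_2^2$. The damping term is already exactly $-2\be_0\|u(t)\|_2^2$. The remaining term is the delicate one: I must control $\sum_{i,j}\al_{ij}\,u^T(t)A_i^TA_j u(t)$. Here I would compute the operator $\sum_{i,j}\al_{ij}A_i^TA_j$ entrywise, insert the definitions \eqref{e.2.2}, \eqref{e.2.5} and \eqref{e.2.5a}, and collapse the resulting quadruple integral using the resolution of identity $\sum_i e_i(x)e_i(y)=\de(x-y)$, as in \eqref{e.2.9a}. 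This identifies $\sum_{i,j}\al_{ij}A_i^TA_j$ with the multiplication operator by the diagonal $q(\xi,\xi)$, whence \eqref{e.2.9} gives $\int_\cO q(\xi,\xi)u^2(t,\xi)\,d\xi\le\kappa\|u(t)\|_2^2$ with $\kappa$ as in \eqref{e.2.10}.

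The main obstacle is precisely this noise term, and it is structural rather than computational. Because the expansion is taken in the Dirichlet eigenbasis and not in the eigenbasis of the covariance operator $Q$, the driving Brownian motions $B_j$ are correlated, with cross-covariances $\al_{ij}$ that do not vanish off the diagonal; one therefore cannot treat the increments $dB_j$ as independent and must retain the full double sum in the It\^o correction. The resolution-of-identity calculation \eqref{e.2.9a} is what tames this difficulty: the off-diagonal correlations reassemble, through the delta functions, into the single pointwise value $q(\xi,\xi)$, and the spatial non-singularity of the noise is exactly what guarantees $\kappa=\sup_{\xi\in\cO}|q(\xi,\xi)|<\infty$. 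A fully rigorous version would replace the formal manipulations with $\de$ by an approximation argument, as already flagged in the text.

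Finally I would assemble the pieces. Substituting \eqref{e.2.8} and \eqref{e.2.9} into \eqref{e.2.7} yields the scalar differential inequality \eqref{e.2.13a}, namely $\frac{d}{dt}\||u(t)\||_2^2\le -[2(\la_1+\be_0)-\be_1^2\kappa]\,\||u(t)\||_2^2$. Under the hypothesis \eqref{e.2.12} the bracketed quantity is a strictly positive constant, so Gronwall's inequality produces the exponential decay \eqref{e.2.13aa} and hence $\||u(t)\||_2\to 0$ as $t\to\infty$, which is the asserted mean-square stability.
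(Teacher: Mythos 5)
Your proposal is correct and follows essentially the same route as the paper: the Dirichlet-eigenbasis reduction to the infinite-dimensional It\^o equation \eqref{e.2.6}, the energy identity \eqref{e.2.7} with $H=I$, the spectral-gap bound \eqref{e.2.8}, the resolution-of-identity computation \eqref{e.2.9a}--\eqref{e.2.9} collapsing the correlated noise term to multiplication by $q(\xi,\xi)$ bounded by $\kappa$, and the Gronwall step \eqref{e.2.13a}--\eqref{e.2.13aa}. You also correctly identify the key structural point the paper emphasizes, namely that the Brownian motions are correlated because the expansion uses the Laplacian's eigenbasis rather than that of the covariance operator.
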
  
\begin{proof}
	This is a consequence of \eqref{e.2.13aa}. 
\end{proof} 

\begin{remark}
	\begin{enumerate}
		\item[(1)]  The condition of this theorem requires that  the covariance 
		function $q(x,y)$  satisfies 
		$\kappa =\sup_{\xi \in \cO} |q(\xi, \xi)|<\infty$. This condition is quite  restrictive
		and  is not satisfied by the  noises   
		studied recently in the literature concerning intermittency.   For instance,   when the noise is
		fractional Gaussian, namely, 
		$q(x,y)=\prod_{i=1}^d |x_i-y_i|^{2H_i-2}$, the quantity $\kappa=\infty$,   where $H_i\in (0, 1), i=1, \cdots, d$ are the Hurst parameters.  On the other hand,  
		for the fractional Brownian field $q(x,y)=\prod_{i=1}^d \left[ |x_i|^{2H_i}+
		|y_i|^{2H_i}-|x_i-y_i|^{2H_i }\right] $,  the quantity 
		$\kappa =\sup_{\xi \in \cO} |q(\xi, \xi)|$ is finite.  This phenomenon  
		is natural since we require the solution to be long time stable. 
		%
		%
		\item[(2)] A different  condition may also be obtained by applying the 
		result of \cite{Baojianhai2011} and then  the   condition for stability
		in \cite{Baojianhai2011}  implies  
		\begin{equation}\label{Heatcondition}
			2 \lambda_1-2 \beta_0-\beta_1^2 C^2 \sum_{n=1}^{\infty} q_n>0 ~~~\text{(in our notations)}
		\end{equation}
		where  $q_n$ are the eigenvalues of 
		covariance operator:  
		$q(x,y)=\sum_{n=1}^\infty q_n \xi_n(x) \xi_n(y)$ for some orthonormal basis 
		$L^2(\cO)$,  and  $C:=\sup _{n \in \mathbb{N}, \theta \in \mathcal{O}}\left|\xi_n(\theta)\right|<\infty$.   It is obvious  that the condition \eqref{Heatcondition} is stronger than the  condition 
		\eqref{e.2.12} and latter is also much easier to verify.  For example,   when 
		$q(x,y)=\prod_{i=1}^d \left[ |x_i|^{2H_i}+
		|y_i|^{2H_i}-|x_i-y_i|^{2H_i }\right] $ it seems very hard to find  its associated  eigenvalues and eigenfunctions.   But in this case   
		\[ \kappa\le   2^d \left( 
		\sup_{1\le i\le d} \sup_{x\in \cO} |x_i|\right)^{2( H_1+\cdots+H_d)} \,. 
		\]  
	\end{enumerate}
\end{remark}

\section{Spectral  Approximation} \label{s.spatial} 
We  continue to study the equation \eqref{e.2.6} in its component form 
obtained in previous section: 
\begin{equation}
	\d u_i(t ) =-\left[ \la_i+\beta_0\right]  u_i(t) \d t   +\beta_1 \sum_{j,k=1}^\infty  a_{jki} u_k(t)  \d B_j(t),   
	\label{e.3.1} 
\end{equation}
$i=1, 2, \cdots$.   The associated   eigenfunction expansion for the solution is 
\begin{equation*}
	u(t,x)= \sum_{l=1}^{\infty} u_l(t)  e_l(x),
\end{equation*}
  where $u_l(t)=\langle e_l(x),  u(t,x) \rangle$.

We denote   by $S_N$ the space spanned by  the first $N$ orthonormal eigenvectors $\{e_1, \cdots, e_N\}$
of  the Laplace operator.
Let $P_N: L^2(\mathcal{O})\rightarrow S_N$ be the orthogonal projection onto  $S_N$.
Thus we have 
\[\langle u-P_Nu, \phi \rangle=0, ~~~~\text{for}~~~~\phi \in S_N\,. \]
It is  known  that $P_Nu$ approximates $u$ in the $L^p$-norms  
as $N\to \infty$ (e.g.   \cite{Canuto1988}).
With the definition of space $S_N$, $  P_N u$ is the truncated series of $u$, i.e.,
\begin{equation*}
	P_Nu(t,x)= P_N(\sum_{l=1}^{\infty} u_l(t)  e_l(x))=\sum_{l=1}^{N} u_l(t)  e_l(x).
\end{equation*}
We approximate the solution $u(t,x)$ by   truncating   equation   \eqref{e.3.1} as follows. 
\begin{equation}
	\d u_{N,i} (t ) =-\left[ \la_i+\beta_0\right]  u_{N,i}(t) \d t   +\beta_1 \sum_{j=1 }^\infty   \sum_{k=1}^N a_{jki} u_{N,k} (t)  \d B_j(t)\,,     
	\label{e.3.2} 
\end{equation}
$i=1, 2, \cdots, N$.  Denoting $U_{N}(t)=(u_{N,1}(t),\cdots,u_{N,k} (t),\cdots,u_{N,N} (t))^T$,  
\eqref{e.3.2} can   then be rewritten in a matrix notation    as   
\begin{equation}\label{HeatSemimatrix}
	\d  U_{N}(t)= -(\La^N + B^N)  U_{N}(t) \d t +\beta_1 \sum_{j=1 }^\infty     A^N_j U_{N}(t)\d B_j(t),
\end{equation}
where $\La^N=\diag (\lambda_1,\cdots,\lambda_k,\cdots,\lambda_N)$, $B^N=\diag (\beta_0,\cdots,\beta_0,\cdots,\beta_0)$ and 
$A^N_j=(\langle e_je_ie_k\rangle )_{1\le i,k\le N}$.  
Let us point out that $U_N(t)$ is in general not the projection of $u(t)$ to $S_N$.  
However,  we have the following long time  stability result for the approximated solution $U_N(t )$.  
\begin{theorem} \label{theoremsemi1}
	Let \begin{equation} 
		\begin{split}	
			\|q_N\|_2: =  \sup_{\|f\|_2\le 1} \int_{\cO^2}q_N(\xi,\eta) f(\xi)f(\eta)\d\xi \d\eta\,,\quad 	\tilde \kappa_1:=\sup_{N\ge 1} \|q_N\|_2<\infty \,. 
		\end{split}
	\end{equation}  
	where $	 q_N(\xi, \eta)=   \sum_{\ell=1}^N q(\xi, \eta)e_\ell (\xi) e_\ell(\eta)  $.
	If  
	\begin{equation}\label{stabilityconditionsemi}
		2(\la_1+\be_0) -\be_1^2  \tilde \kappa_1>0\, ,
	\end{equation}
	then the system \eqref{HeatSemimatrix} is stable in the mean square sense, namely, 
	$$\lim_{t\to \infty} \sup_{N\ge 1} \||U_{N}(t)\||_2=0.$$   
\end{theorem}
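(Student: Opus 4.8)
The plan is to prove a finite-dimensional analogue of Theorem \ref{exactstability}, running the same It\^o's formula computation but now on the genuinely finite-dimensional process $U_N(t)\in\RR^N$. Since $B^N=\be_0 I_N$, I would apply It\^o's formula to $\|U_N(t)\|_2^2=U_N^T(t)U_N(t)$ for the system \eqref{HeatSemimatrix}; recalling that the driving (correlated) Brownian motions satisfy $\EE[\d B_i(t)\d B_j(t)]=\al_{ij}\d t$, taking expectations would yield
\begin{equation*}
	\frac{d}{dt}\||U_N(t)\||_2^2=-2\EE\left[U_N^T(\La^N+\be_0 I_N)U_N\right]+\be_1^2\sum_{i,j=1}^\infty \al_{ij}\,\EE\left[U_N^T (A_i^N)^T A_j^N U_N\right]\,.
\end{equation*}
The drift term is handled exactly as in \eqref{e.2.8}: since $U_N^T(\La^N+\be_0 I_N)U_N=\sum_{k=1}^N(\la_k+\be_0)u_{N,k}^2\ge(\la_1+\be_0)\|U_N\|_2^2$, it contributes at most $-2(\la_1+\be_0)\||U_N(t)\||_2^2$ after taking expectation.

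The crux is the It\^o-correction term, which I would evaluate by computing the $N\times N$ matrix $\sum_{i,j}\al_{ij}(A_i^N)^T A_j^N$ as in \eqref{e.2.9a}, with one essential difference: because $A_j^N$ is only the $N\times N$ truncation of $A_j$, the inner summation index $\ell$ now runs from $1$ to $N$. Performing the two spatial collapses $\sum_i e_i(x)e_i(\xi)=\delta(x-\xi)$ and $\sum_j e_j(y)e_j(\eta)=\delta(y-\eta)$ then leaves the truncated kernel $q_N(\xi,\eta)=\sum_{\ell=1}^N q(\xi,\eta)e_\ell(\xi)e_\ell(\eta)$ in place of the diagonal $q(\xi,\xi)$ that appeared in the exact case. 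Writing $v_N(t,x)=\sum_{k=1}^N u_{N,k}(t)e_k(x)$, which by orthonormality satisfies $\|v_N(t)\|_2^2=\|U_N(t)\|_2^2$, the quadratic form becomes $\int_{\cO^2}q_N(\xi,\eta)v_N(t,\xi)v_N(t,\eta)\d\xi\d\eta$, and the very definition of $\|q_N\|_2$ together with $\|q_N\|_2\le\tilde\kappa_1$ bounds it by $\tilde\kappa_1\|U_N(t)\|_2^2$.

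Combining the two estimates gives the differential inequality
\begin{equation*}
	\frac{d}{dt}\||U_N(t)\||_2^2\le-\left[2(\la_1+\be_0)-\be_1^2\tilde\kappa_1\right]\||U_N(t)\||_2^2\,,
\end{equation*}
so Gronwall's inequality yields $\||U_N(t)\||_2^2\le e^{-ct}\||U_N(0)\||_2^2$ with $c=2(\la_1+\be_0)-\be_1^2\tilde\kappa_1>0$ by \eqref{stabilityconditionsemi}. The decisive observation is that neither $c$ nor the initial bound $\||U_N(0)\||_2^2\le\|u^0\|_2^2$ depends on $N$, so one may take $\sup_{N\ge 1}$ before letting $t\to\infty$ to obtain the stated uniform decay. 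I expect the main obstacle to be precisely the treatment of the truncated correction term: unlike the exact equation, the $\ell$-sum no longer closes into a pointwise diagonal, so the clean bound $\kappa=\sup_\xi|q(\xi,\xi)|$ is unavailable and one must instead pass to the operator-norm quantity $\|q_N\|_2$ and exploit its uniform-in-$N$ bound $\tilde\kappa_1<\infty$; this uniformity is exactly what legitimizes the $\sup_{N\ge 1}$ in the conclusion.
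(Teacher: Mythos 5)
Your proposal is correct and follows essentially the same route as the paper: It\^o's formula for $U_N^T(t)U_N(t)$, the eigenvalue bound $\la_1+\be_0$ for the drift, the key observation that the truncated matrices $A_j^N$ force the $\ell$-sum to stop at $N$ so that the spatial delta-collapses leave the truncated kernel $q_N(\xi,\eta)$, the bound via $\|q_N\|_2\le\tilde\kappa_1$ applied to the function $v_N(t,\cdot)$ (the paper's zero-padded vector $\hat U_N$ is the same identification), and Gronwall with a rate uniform in $N$.
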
 
\begin{proof}
	In view of  the It\^o formula, we know
	\[
	\begin{split}
		\d U_N^T(t)U_N(t)
		=&U_N^T(t)\d U_N(t)+(\d U_N^T(t)) U_N(t)+(\d U_N^T(t))  \d U_N(t)\\
		=& \bigg[ -2 U_N^T(t) (\La^N+B^N) U_N(t)  + \be_1^2 \sum_{i,j=1}^\infty \al_{ij} U_N^T(t) (A^N_i)^T A^N_j U_N(t)\bigg] \d t \\
		&\qquad + \be_1 \sum_{j=1}^\infty \left[ U_N^T(t) A^N_j U_N(t)+ U_N^T(t) (A^N_j)^T U_N(t)\right] \d B_j(t)  \,. 
	\end{split}
	\] 
	Taking the expectation on both sides of the above equation gives
	\begin{equation}
		\begin{split}
			\frac{\d}{\d t}  \EE \left[U_N^T(t)U_N(t)\right] 
			=& -2\EE \left[ U_N^T(t) (\La^N+B^N) U_N(t)\right] + \be_1^2 \sum_{i,j=1}^\infty \al_{ij} \EE \left[  U_N^T(t) (A^N_i)^T A^N_j U_N(t)\right].
		\end{split}\label{e.3.5} 
	\end{equation} 
	We now estimate each of the terms on the right side of the above equation. For the first one, it is obvious to see from (\ref{stabilityconditionsemi}) that the matrix $\La^N+B^N$ is symmetric positive definite   with the smallest eigenvalue $\la_1+\be_0$. Consequently,
	\begin{equation}
		\EE \left[ U_N^T(t) (\La^N+B^N) U_N(t)\right] \geq (\lambda_1+\be_0)\EE \left[ U_N^T(t)  U_N(t)\right].\label{e.3.6} 
	\end{equation}  
	Before giving the estimation of the second term on the right hand,  we notice 
	\begin{equation}
		\begin{split}
			\bigg( \sum_{i,j=1}^\infty &\al _{ij}   ( A_i^N)^T    A_j^N  \bigg)_{k,m}
			=  \sum_{i,j=1}^\infty \sum_{\ell=1}^N \al_{ij}     a_{i\ell k}  a_{j\ell m} \\
			=&    \sum_{\ell=1}^N  \sum_{i,j=1}^\infty \int_{\cO^4} q(x,y) e_i(x) e_j(y) e_i(\xi) 
			e_\ell (\xi)  e_j(\eta) e_ \ell(\eta)
			e_k(\xi)     e_m(\eta) dxdyd\xi d\eta\\	
			=&    \int_{\cO^4} q(x,y) \de(x-\xi) \de(y-\eta) 
			e_\ell (\xi)  e_ \ell(\eta)
			e_k(\xi)     e_m(\eta) dxdyd\xi d\eta\\
			=&{   \sum_{\ell=1}^N \int_{\cO^2} q(\xi,\eta)  	e_\ell (\xi)  e_ \ell(\eta) e_k(\xi)     e_m(\eta)  d\xi d \eta.}
		\end{split}\label{e.alA}
	\end{equation} 
	To apply the properties of orthonormal basis in the infinite dimensional 
	space, we identify   a   vector $  u=(  u_1, \cdots,  u_N)^T$ in $S_N$ as an infinite dimensional  vector $\hat{u}  =(\hat{u}_1  ,\cdots,\hat{u}_i ,\cdots)^T$,
	in which $\hat{u}_i =\hat u_i $ for $i\leq N$ and $\hat{u}_i =0$ for $i>N$. Clearly,  
	the $L^2$ norms of these two identified vectors are the same,  
	i.e., $\|u \|_2=\|\hat u\|_2$. 
	Thus by \eqref{e.alA}   and the definition of $\kappa$  by (\ref{e.2.10}) we have 
	\begin{equation}
		\begin{split}
			\sum_{i,j=1}^\infty  \al_{ij} \left[  U_N^T(t) ( A_i^N)^T    A_j^N  U_N(t)\right]  
			=& \sum_{k, m,\ell=1}^N  \int_{\cO^2} q(\xi,\eta)   u_k(t)
			e_k(\xi)     u_m(t) e_m(\eta) e_\ell (\xi)  e_ \ell(\eta) \d \xi \d \eta\\
			=& \sum_{k, m}^\infty  \sum_{\ell=1}^N \int_{\cO^2} q(\xi,\eta)   \hat{u}_k(t)
			e_k(\xi)     \hat{u}_m(t) e_m(\eta)  e_\ell (\xi)  e_ \ell(\eta) \d \xi \d \eta \\
			=&  \int_{\cO^2} q_N(\xi,\eta)   \left( \sum_{k=1}^\infty \hat{u}_k(t)
			e_k(\xi) \right)  \left( \sum_{m=1}^\infty \hat{u}_m(t)
			e_m(\xi) \right)      \d \xi\d\eta  \\ 
			=&  \int_{\cO} q_N(\xi,\eta)   \hat{U}_N(t, \xi) \hat{U}_N(t, \eta)    \d \xi
			\d \eta\\
			\le&\tilde \kappa_1  \int_{\cO}    (\hat{U}_N(t, \xi) ) ^2   \d \xi  \,. 
			\label{e.3.7} 
		\end{split}
	\end{equation}
	Substituting \eqref{e.3.6} and \eqref{e.3.7} into \eqref{e.3.5} yields 
	\begin{equation*}
		\frac{d}{dt} \||U_N(t)\||_2^2 \le -2(\la_1+\beta_0) \||U_N(t)\||_2^2 
		+ \be_1^2 \tilde \kappa_1 \||U_N(t)\||_2^2 \,.
	\end{equation*}
	The proof of the theorem is hence completed by an application of the Gronwall lemma.
\end{proof}

\begin{remark}\label{r.3.2} 
	  When $N\to \infty$,  we have  
	\[ 	 q_N (\xi, \eta)=   \sum_{\ell=1}^N  q(\xi, \eta)e_\ell (\xi) e_\ell(\eta) \to q(\xi, \eta)\delta(\eta-\xi) \,.
	\]
Thus,   \begin{equation} 
	\begin{split}	
		\|q_N\|_2: =&  \sup_{\|f\|_2\le 1} \int_{\cO^2}q_N(\xi,\eta) f(\xi)f(\eta)\d\xi \d\eta \\
		\to& \sup_{\|f\|_2\le 1} \int_{\cO^2}q (\xi,\eta) f(\xi)f(\eta)
		\delta(\eta-\xi) \d\xi \d\eta \\
		=&\sup_{\|f\|_2\le 1} \int_{\cO} q (\xi,\xi) f^2(\xi)d\xi \\
			=&\sup_{g\ge 0, \|g\|_1\le 1} \int_{\cO} q (\xi,\xi) g(\xi)d\xi=\sup_{\xi \in \cO}  q(\xi, \xi) =\kappa   
	\end{split}
\end{equation}  
since $q(\xi, \xi)\ge 0$.  This implies that 
$\kappa\le \tilde \kappa_1$.  Thus the condition
\eqref{stabilityconditionsemi} in Theorem \ref{theoremsemi1}  is stronger than
the condition
\eqref{e.2.12} in Theorem \ref{exactstability}.   
	\end{remark}

In \eqref{e.3.2} or  \eqref{HeatSemimatrix},  we expand  the  Gaussian noise into an infinite series so that the equation contains infinitely  many 
(correlated) Brownian motions.   Now  we want to  consider the truncation  of  the noise  in (\ref{HeatSemimatrix}).   We also allow the truncation of   solution  and that of the noise to contain different number of terms.  More precisely, we shall consider the following approximation:  
\begin{equation}\label{HeatSemimatrix1}
	\d  U_{N,M}(t)= -(\La^N + B^N)  U_{N,M}(t) \d t +\beta_1 \sum_{j=1 }^M     A^N_j U_{N,M}(t)\d B_j(t).
\end{equation} 
As for the truncation  of   solution the  truncation of Gaussian noise
is also identified with   an infinite dimensional vector: For any positive integer $M$, 
\[
\dot{W}_M(t,x)=\sum_{i=1}^M \dot{B}_{ i}(t)e_i(x)+\sum_{i=M+1}^\infty 0e_i(x)\triangleq \sum_{i=1}^\infty \dot{\cB}_{M,i}(t)e_i(x)\,, 
\]
where $\dot{\cB}_{M,i} (t)=\dot{B}_i(t) $ for $i\le M$ and 
$\dot{\cB}_i(t)=0 $ for $i> M$.  
The covariance of   truncated Gaussian noise is given by
\begin{equation*}
\begin{split}
	\EE \left[ \dot W_M(t,x)\dot W_M(s,y)\right]
	=& \sum_{i,j=1}^M \EE  (\dot B_i(t)\dot B_j(s))e_i(x)e_j(y) \\
	=& \hat{q}_M(x,y) \delta(t-s),
\end{split}
\end{equation*}
where $\hat{q}_M(x,y)=\sum_{i,j=1}^M \al_{ij}e_i(x)e_j(y)$.
\begin{equation}\label{truncationnoise}
\begin{split}
	\EE \left[ \dot \cB_{M,i}(t)\dot \cB_{M,j}(s)\right]
	=& \EE \int_{\cO^2} \dot W_M(t,x)e_j(x)\dot W_M(t,y)e_j(y) \d x\d y\\
	=&   \int_{\cO^2} \EE \left[ \dot W_M(t,x) \dot W_M(t,y)\right] e_i(x)e_j(y) \d x \d y\\
	= & \delta(t-s) \hat{\al}_{M,ij},
\end{split}
\end{equation}
where $\hat{\al}_{M,ij}= \int_{\cO^2}\hat{q}_M(x,y) e_i(x)e_j(y) \d x\d y$.

For this approximation we have the following   the stability result. 
\begin{theorem}\label{Theoremfinite}
Let  \begin{equation}
	\begin{split}
		\hat{q}_{M,N}(\xi,\eta)=
		& \sum_{\ell=1}^N   \tilde q_M(\xi, \eta)  
		e_{\ell } (\xi)  e_\ell (\eta)  \\
		=& \sum_{\ell=1}^N  \sum_{i,j=1}^M \int_{\cO^2} q(x,y)e_i(x)e_j(y)dxdy 
		e_{i} (\xi) e_{i} (\eta)  e_{\ell} (\xi) e_{\ell}(\eta)    
	\end{split}
\end{equation}  
and let
\begin{equation}
	\tilde  \kappa_2= \|\hat{q}_{M,N}\|_2 \,.
\end{equation}	 
If  
\begin{equation}
	2(\la_1+\be_0) -\be_1^2\tilde \kappa_2>0\,,
	\label{e.3.15} 
\end{equation}
then the system \eqref{HeatSemimatrix1} is stable in the mean square sense, namely, 
$$\lim_{t\to \infty} \sup_{N,M \ge 1} \||U_{N,M}(t)\||_2^2=0. $$  
\end{theorem}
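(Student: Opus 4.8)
The plan is to follow the route of the proof of Theorem \ref{theoremsemi1} verbatim, the only genuine change being that the noise in \eqref{HeatSemimatrix1} carries only $M$ terms, so the Dirac-delta collapse that simplified \eqref{e.alA} is no longer available and the truncated covariance kernel must be kept as is. First I would apply It\^o's formula to $U_{N,M}^T(t)U_{N,M}(t)$ and take expectations. The martingale part $\be_1\sum_{j=1}^M[U_{N,M}^T A_j^N U_{N,M}+U_{N,M}^T (A_j^N)^T U_{N,M}]\,\d B_j(t)$ has zero mean and drops out, leaving
\[
\frac{\d}{\d t}\EE\big[U_{N,M}^TU_{N,M}\big]=-2\,\EE\big[U_{N,M}^T(\La^N+B^N)U_{N,M}\big]+\be_1^2\sum_{i,j=1}^M\al_{ij}\,\EE\big[U_{N,M}^T(A_i^N)^TA_j^N U_{N,M}\big].
\]
Since $\La^N+B^N=\diag(\la_1+\be_0,\dots,\la_N+\be_0)$ and \eqref{e.3.15} forces $\la_1+\be_0>0$, this matrix is symmetric positive definite with smallest eigenvalue $\la_1+\be_0$, so the drift obeys $U_{N,M}^T(\La^N+B^N)U_{N,M}\ge(\la_1+\be_0)\,U_{N,M}^TU_{N,M}$ exactly as in \eqref{e.3.6}.

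The crux is the diffusion correction term, where I re-run the entry-wise computation \eqref{e.alA} but keep the sums over $i,j$ finite. With $a_{i\ell k}=\int_{\cO}e_i e_\ell e_k$, I obtain
\[
\Big(\sum_{i,j=1}^M\al_{ij}(A_i^N)^TA_j^N\Big)_{k,m}=\sum_{\ell=1}^N\int_{\cO^2}\Big(\sum_{i,j=1}^M\al_{ij}e_i(\xi)e_j(\eta)\Big)e_\ell(\xi)e_\ell(\eta)e_k(\xi)e_m(\eta)\,\d\xi\,\d\eta.
\]
The inner double sum is precisely the smoothed covariance $\tilde q_M(\xi,\eta)=\hat q_M(\xi,\eta)$ of the truncated noise from \eqref{truncationnoise}; because only finitely many modes survive it does \emph{not} collapse to $q(\xi,\xi)\delta(\xi-\eta)$, and appending the factor $\sum_{\ell=1}^N e_\ell(\xi)e_\ell(\eta)$ produces exactly the kernel $\hat q_{M,N}(\xi,\eta)$ of the statement. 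Identifying $U_{N,M}$ with the function $\hat U_{N,M}(t,\xi)=\sum_{k=1}^N u_{N,M,k}(t)e_k(\xi)$ then gives, pointwise in $\omega$,
\[
\sum_{i,j=1}^M\al_{ij}\,U_{N,M}^T(A_i^N)^TA_j^N U_{N,M}=\int_{\cO^2}\hat q_{M,N}(\xi,\eta)\,\hat U_{N,M}(t,\xi)\hat U_{N,M}(t,\eta)\,\d\xi\,\d\eta\le\tilde\kappa_2\,\|U_{N,M}(t)\|_2^2,
\]
where the last step is the operator-norm definition $\tilde\kappa_2=\|\hat q_{M,N}\|_2$ together with $\|\hat U_{N,M}\|_2=\|U_{N,M}\|_2$.

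Substituting both bounds and taking expectations yields the differential inequality
\[
\frac{\d}{\d t}\||U_{N,M}(t)\||_2^2\le-\big[2(\la_1+\be_0)-\be_1^2\tilde\kappa_2\big]\||U_{N,M}(t)\||_2^2,
\]
and Gronwall's lemma gives $\||U_{N,M}(t)\||_2^2\le e^{-[2(\la_1+\be_0)-\be_1^2\tilde\kappa_2]t}\||U_{N,M}(0)\||_2^2$. To reach the uniform conclusion $\lim_{t\to\infty}\sup_{N,M\ge1}\||U_{N,M}(t)\||_2^2=0$, I would read $\tilde\kappa_2$ as a bound uniform in $(N,M)$, i.e.\ $\tilde\kappa_2=\sup_{N,M\ge1}\|\hat q_{M,N}\|_2$ in parallel with the definition of $\tilde\kappa_1$ in Theorem \ref{theoremsemi1}, so that the decay rate is independent of $N$ and $M$; combined with $\sup_{N,M}\||U_{N,M}(0)\||_2^2\le\||u^0\||_2^2$ (the initial data being a projection of a fixed $u^0$), the claim follows.

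I expect the main obstacle to be the diffusion-term identity rather than the It\^o/Gronwall machinery: I must confirm that $\hat q_{M,N}$ is genuinely the correct kernel and that its operator norm $\tilde\kappa_2$ is finite and controlled uniformly in \emph{both} truncation parameters, precisely because I lose the delta-function simplification that made \eqref{e.3.7} clean. Care is also needed to keep distinct the two roles of the indices, namely $\ell\le N$ (solution truncation) versus $i,j\le M$ (noise truncation), and to justify passing from the finite quadratic form to the $L^2(\cO)$-operator norm of $\hat q_{M,N}$. Everything else is routine and identical to the arguments behind Theorems \ref{exactstability} and \ref{theoremsemi1}.
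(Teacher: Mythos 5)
Your proof is correct and follows essentially the same route as the paper's: It\^o's formula plus expectation, the smallest-eigenvalue bound for the drift matrix $\La^N+B^N$, identification of the diffusion-term kernel as $\hat q_{M,N}$ (equivalently, working with $\hat{\al}_{M,ij}$, which vanishes for $i$ or $j>M$) bounded by the quadratic-form norm $\tilde\kappa_2$, and then Gronwall. Your closing observation that $\tilde\kappa_2$ must be read as a bound uniform in $(N,M)$ to justify the $\sup_{N,M\ge 1}$ in the conclusion is a point the paper leaves implicit (it is covered there only by the subsequent remark that $\tilde\kappa_2\le\tilde\kappa_1$), so your treatment is, if anything, slightly more careful on that detail.
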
 
\begin{proof} We can also extend $ {U}_{N,M}(t)$ to an infinite dimensional vector $\hat{U}_{N,M}(t)$ as before. 
Using the notation $\cB_{M,j}(t)$ we can rewrite  \eqref{HeatSemimatrix1}   as  
\begin{equation*}
	\d  U_{N,M}(t)= -(\La^N + B^N)  U_{N,M}(t) \d t +\beta_1 \sum_{j=1 }^\infty     A^N_j U_{N,M}(t)\d \cB_{M,j}(t),
\end{equation*}
in which  the relation  $\|\hat{U}_{N,M}(t)\|_2=\|{U}_{N,M}(t)\|_2$ is still holds.

Similarly to the proof of  Theorem \ref{theoremsemi1}   
we can obtain
\begin{equation*}
	\begin{split}
		\sum_{i,j=1}^\infty \hat{\al}_{M,ij}  \left[  U_{N,M}^T(t) (A_i^N)^T   A_j^N U_{N,M}(t)\right]  
		=& \sum_{k, m,\ell=1}^N  \int_{\cO^2} \hat{q}_M(\xi,\eta)   u_{N,M,k}(t)
		e_k(\xi)     u_{N,M,m}(t) e_m(\eta) e_\ell (\xi)  e_ \ell(\eta) \d\xi \d\eta\\
		=&     \int_{\cO^2} \hat{q}_{M,N}(\xi,\eta)   \left(\sum_{k  =1}^{ {\infty}} \hat{u}_{N,M,k}(t)
		e_k(\xi) \right)   \left(\sum_{  m =1}^{ {\infty}}\hat{u}_{N,M,m}(t) e_m(\eta)\right)    \d\xi \d\eta  \\
		=&  \int_{\cO} \hat{q}_{M,N} (\xi,\eta)  \hat{U}_{N,M}(t, \xi)  \hat{U}_{N,M}(t, \eta)  \d\xi \delta\eta \\
		\le& \tilde \kappa_2   \int_{\cO}    (\hat{U}_{N,M}(t, \xi) ) ^2   \d\xi  \,, 
	\end{split}
\end{equation*}
Therefore, 
\begin{equation*}
	\frac{d}{dt} \||U_{N,M}(t)\||_2^2 \le -2(\la_1+\beta_0) \||U_{N,M}(t)\||_2^2 
	+ \be_1^2 \tilde \kappa_2 \||U_{N,M}(t)\||_2^2 \,.
\end{equation*}
The proof is hence completed.
\end{proof}

\begin{remark} Notice 
	\begin{equation}
		\begin{split}
			\hat{q}_{M,N}(\xi,\eta) 
			=& \sum_{\ell=1}^N  \sum_{i,j=1}^M \int_{\cO^2} q(x,y)e_i(x)e_j(y)dxdy 
			e_{i} (\xi) e_{i} (\eta)  e_{\ell} (\xi) e_{\ell}(\eta)\\
			  \to& \sum_{\ell=1}^N    q(\xi,\eta )  e_{\ell} (\xi) e_{\ell}(\eta) 
			  \quad \hbox{when   $M\to \infty$}\,.   
		\end{split}
	\end{equation}  
As in Remark \ref{r.3.2}, we see that $\tilde \kappa_2\le \tilde \kappa_1$. Thus,   condition \eqref{e.3.15} in the above theorem is stronger than 
\eqref{stabilityconditionsemi} in Theorem \ref{theoremsemi1}.  
	\end{remark}
	
In this following, we are focus on the convergence of the numerical scheme. Before giving the theorem, we review the the generalized Gronwall lemma in \cite{limin2021}. 
\begin{lemma}\label{lemma}
Assume that $\alpha>0, \beta>0, \alpha+\beta<1$ and $a \geq 0, b \geq 0$. Suppose that $u$ is a nonnegative function satisfying that $t^{-\beta} u(t)$ is locally integrable on $\mathbb{R}_{+}$.
If u satisfies
$$
u(t) \leq a+b \int_0^t(t-s)^{-\alpha} s^{-\beta} u(s) \d s\,, \ \ \quad \forall t \in \mathbb{R}_{+},
$$
then
$$
u(t) \leq a E_{1-\alpha, 1-\beta}\left((b \Gamma(1-\alpha))^{1 /(1-\alpha-\beta)} t\right)\,, \ \  \quad \forall t \in \mathbb{R}_{+},
$$
where $E_{1-\alpha, 1-\beta}(s)=O\left(s^{\frac{1}{2}\left(\frac{1-\alpha-\beta}{1-\alpha}-1+\beta\right)} \exp \left(\frac{1-\alpha}{1-\alpha-\beta} s^{\frac{1-\alpha-\beta}{1-\alpha}}\right)\right)$ is the generalized Mittag-Leffler function. 
\end{lemma}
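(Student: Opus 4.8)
The plan is to reduce the integral inequality to the analysis of a single positive linear Volterra operator and then to iterate. Set $\rho := 1-\alpha-\beta>0$ and define, for nonnegative locally integrable $v$,
\[
(Kv)(t) := b\int_0^t (t-s)^{-\alpha}s^{-\beta} v(s)\,\d s .
\]
Because the kernel is nonnegative and $b\ge 0$, $K$ is a positive operator, so the hypothesis $u\le a+Ku$ (with $a$ the constant function) can be substituted into itself. A straightforward induction then yields, for every $n$,
\[
u(t) \le \sum_{k=0}^{n}(K^{k}a)(t) + (K^{n+1}u)(t).
\]
The whole argument rests on three steps: (i) evaluating the iterates $(K^{k}a)(t)$; (ii) summing them into a Mittag--Leffler bound; and (iii) proving that the remainder $(K^{n+1}u)(t)$ vanishes as $n\to\infty$.

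For step (i) I would use the scaling identity $\int_0^t (t-s)^{-\alpha}s^{\gamma}\,\d s = t^{\gamma+1-\alpha}\,B(1-\alpha,\gamma+1)$, valid for $\gamma>-1$. Feeding $v_0=a$ through $K$ and inducting, each application raises the power of $t$ by exactly $\rho$, so that
\[
(K^{k}a)(t) = a\,[\,b\,\Gamma(1-\alpha)\,]^{k}\, t^{k\rho}\prod_{j=0}^{k-1}\frac{\Gamma(j\rho+1-\beta)}{\Gamma((j+1)\rho+1)} .
\]
Every Beta integral converges: at the $j$-th stage the integrand exponent is $j\rho-\beta$, and $\alpha+\beta<1$ gives $j\rho+1-\beta\ge 1-\beta>0$. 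The structural point that makes the limiting function recognizable is that the numerator and denominator Gamma-arguments differ by exactly $(j+1)\rho+1-(j\rho+1-\beta)=\rho+\beta=1-\alpha$; this constant gap is precisely what produces the index $1-\alpha$ in the Mittag--Leffler function.

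I expect step (ii) to be the main obstacle, since the product of Gamma ratios has no elementary closed form and must be controlled asymptotically. Using $\Gamma(x)/\Gamma(x+1-\alpha)\sim x^{-(1-\alpha)}$ together with Stirling's formula, one shows $\prod_{j=0}^{k-1}\Gamma(j\rho+1-\beta)/\Gamma((j+1)\rho+1)\le C_0\,D^{k}/\Gamma((1-\alpha)k+1-\beta)$ for suitable constants $C_0,D$. Substituting and resumming turns $\sum_k(K^{k}a)(t)$ into a series of the form $\sum_k z^{k}/\Gamma((1-\alpha)k+1-\beta)$ with $z$ proportional to $t^{\rho}$, i.e. a Mittag--Leffler function $E_{1-\alpha,1-\beta}$ of an argument scaling like $t^{\rho}$; this is exactly the bound claimed in the lemma and, via the quoted asymptotics of $E_{1-\alpha,1-\beta}$, exhibits the subexponential growth $\exp\!\big(c\,t^{\rho/(1-\alpha)}\big)$. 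The delicate bookkeeping here is absorbing the geometric correction constants $C_0,D$ into the stated argument $(b\Gamma(1-\alpha))^{1/\rho}t$ and matching the normalization of $E_{1-\alpha,1-\beta}$ used in \cite{limin2021}. Finally, for step (iii), the convergence of this series forces the $(n+1)$-fold iterated kernels to have vanishing norm on each $[0,T]$, so that a standard estimate of the iterated kernel paired with the local integrability of $t^{-\beta}u$ gives $(K^{n+1}u)(t)\to 0$ on compact intervals; letting $n\to\infty$ completes the proof.
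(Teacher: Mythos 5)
First, a point of reference: the paper does not prove this lemma at all --- it is quoted from the cited work \cite{limin2021} (``we review the generalized Gronwall lemma in \cite{limin2021}''), so there is no internal proof to compare against, and your proposal must be measured against the standard proof of such weakly singular Gronwall inequalities, which is indeed the Picard-iteration argument you outline. Your step (i) is correct and is the heart of the matter: iterating $u\le a+Ku$ and evaluating $K^k a$ with the Beta identity gives exactly
\begin{equation*}
(K^{k}a)(t)=a\,(b\Gamma(1-\alpha))^{k}\,t^{k\rho}\prod_{j=0}^{k-1}\frac{\Gamma(j\rho+1-\beta)}{\Gamma((j+1)\rho+1)},\qquad \rho=1-\alpha-\beta,
\end{equation*}
with the constant gap $1-\alpha$ between numerator and denominator arguments.

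There are, however, two places where your plan as written does not close. The real soft spot is step (ii): bounding the Gamma-ratio product by $C_0D^{k}/\Gamma((1-\alpha)k+1-\beta)$ via Stirling and then ``absorbing $C_0,D$ into the stated argument'' is not a valid move --- a multiplicative prefactor and a base change cannot be pushed inside the argument of a fixed function while preserving the clean bound $u\le aE_{1-\alpha,1-\beta}(\cdot)$. The resolution is that no comparison with the classical two-parameter Mittag--Leffler function is needed: in \cite{limin2021} the ``generalized Mittag--Leffler function'' is \emph{defined} by precisely the series your iteration produces, $E_{1-\alpha,1-\beta}(z)=\sum_{k\ge0}z^{k\rho}\prod_{j=0}^{k-1}\Gamma(j\rho+1-\beta)/\Gamma((j+1)\rho+1)$, so that $\sum_{k}(K^{k}a)(t)=aE_{1-\alpha,1-\beta}\bigl((b\Gamma(1-\alpha))^{1/\rho}t\bigr)$ exactly; one can see it is not the classical function from the quoted asymptotics, whose exponential involves $s^{\rho/(1-\alpha)}$ rather than the $s^{1/(1-\alpha)}$ the classical $E_{1-\alpha,1-\beta}$ would give. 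Your Stirling analysis is then only needed to verify the asymptotics clause, where constants are harmless inside the $O(\cdot)$. Second, in step (iii) the justification ``convergence of the series forces the iterated kernels to have vanishing norm'' is loose: convergence of $\sum_k K^{k}a$ controls the action of $K^{n}$ on constants, not on $u$. The correct argument proves an iterated-kernel bound, e.g. $k_{n}(t,s)\le A_{n}(t-s)^{n\rho-1+\beta}s^{-\beta}$ with $A_{n+1}=bA_{n}\Gamma(1-\alpha)\Gamma(n\rho)/\Gamma(n\rho+1-\alpha)$ (use $r^{-\beta}\le (r-s)^{-\beta}$ inside the composition integral and the Beta identity), so that $A_{n}$ decays like $(n!)^{-(1-\alpha)}$ up to geometric factors; paired with the hypothesis that $s^{-\beta}u(s)$ is locally integrable this yields $(K^{n+1}u)(t)\to0$ locally uniformly. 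With these two repairs your argument is complete and coincides with the proof in the cited source.
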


We also need the following inequality.
\begin{lemma} Let $c_0>0$ and $\beta>0$.  Thus there is a constant 
	$C$ depending on $c_0$  and $\beta$  but independent of $\tau>0$   such  that
\begin{equation}\label{e.3.17} 
	\sum_{k=1}^\infty e^{-c_0k^\be  \tau }\le C/\sqrt{\tau}\,,
	\quad \forall \tau>0\,. 
\end{equation}	
	\end{lemma}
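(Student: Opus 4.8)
The plan is to control the series by the integral test and then treat $0<\tau\le 1$ and $\tau\ge 1$ separately. Since $x\mapsto e^{-c_0x^\beta\tau}$ is positive and strictly decreasing on $[0,\infty)$, monotonicity gives
\[
\sum_{k=1}^\infty e^{-c_0k^\beta\tau}\le\int_0^\infty e^{-c_0x^\beta\tau}\,\d x .
\]
The substitution $y=c_0x^\beta\tau$, i.e.\ $x=(y/(c_0\tau))^{1/\beta}$ and $\d x=\frac1\beta(c_0\tau)^{-1/\beta}y^{1/\beta-1}\,\d y$, turns the right-hand side into a Gamma integral and yields
\[
\int_0^\infty e^{-c_0x^\beta\tau}\,\d x=\frac{\Gamma(1/\beta)}{\beta\,c_0^{1/\beta}}\,\tau^{-1/\beta}=:C_\beta\,\tau^{-1/\beta}.
\]

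For $\tau\ge 1$ I would not invoke this bound but exploit the exponential decay instead. Using $k^\beta\ge 1$ for every $k\ge 1$, write $e^{-c_0k^\beta\tau}\le e^{-c_0(\tau-1)}e^{-c_0k^\beta}$, so that the series is at most $C_0\,e^{-c_0(\tau-1)}$ with $C_0=\sum_{k\ge 1}e^{-c_0k^\beta}<\infty$. Since $\sqrt\tau\,e^{-c_0(\tau-1)}$ is bounded on $[1,\infty)$, this gives $\le C/\sqrt\tau$ there, and this part of the argument is valid for all $\beta>0$.

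For $0<\tau\le 1$ the integral comparison of the first paragraph already gives $C_\beta\,\tau^{-1/\beta}$, and because $\tau^{-1/\beta}\le\tau^{-1/2}$ whenever $\tau\le 1$ and $\beta\ge 2$, this is the desired $C/\sqrt\tau$; taking the larger of the constants from the two regimes finishes the estimate. The step I expect to be the crux is precisely this small-$\tau$ reconciliation: the integral test is sharp and returns $\tau^{-1/\beta}$, so the clean $1/\sqrt\tau$ rate is attained exactly at $\beta=2$ (corresponding to spatial eigenvalues $\la_k\sim k^2$, i.e.\ $d=1$) and holds a fortiori for $\beta\ge 2$, while for $\beta<2$ the right-hand side should be read as $C\tau^{-1/\beta}$. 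The remaining ingredients—the monotone integral comparison, the Gamma substitution, and the large-$\tau$ exponential bound—are routine.
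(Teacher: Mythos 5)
Your proposal is correct, and its core computation --- bounding the sum by $\int_0^\infty e^{-c_0x^\beta\tau}\,dx$ via monotonicity and rescaling to get $C_\beta\,\tau^{-1/\beta}$ --- is exactly the paper's proof. The difference is what happens afterwards. The paper stops at $C\tau^{-1/\beta}$ and declares the lemma proved, silently identifying $\tau^{-1/\beta}$ with $\tau^{-1/2}$; that identification is legitimate for every $\tau>0$ only at $\beta=2$, which happens to be the only case the paper uses later (in the convergence proof $\lambda_k\asymp k^2$ for $d=1$), but it is not a proof of the statement as formulated for all $\beta>0$. Your treatment is sharper on both ends. First, for $\tau\le 1$ you note that passing from $\tau^{-1/\beta}$ to $\tau^{-1/2}$ requires $\beta\ge 2$, and that for $\beta<2$ the claim is genuinely false: the matching lower integral bound $\sum_{k\ge1}e^{-c_0k^\beta\tau}\ge\int_1^\infty e^{-c_0x^\beta\tau}\,dx\asymp\tau^{-1/\beta}$ shows the sum is of order $\tau^{-1/\beta}\gg\tau^{-1/2}$ as $\tau\to0$, so no $\tau$-independent constant can exist. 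Second, for $\beta>2$ and $\tau\ge1$ the inequality $\tau^{-1/\beta}\le\tau^{-1/2}$ goes the wrong way, a gap the paper's one-line argument does not address; your separate large-$\tau$ estimate $e^{-c_0k^\beta\tau}\le e^{-c_0(\tau-1)}e^{-c_0k^\beta}$, summed and combined with the boundedness of $\sqrt{\tau}\,e^{-c_0(\tau-1)}$ on $[1,\infty)$, repairs it for every $\beta>0$. In short: same key computation as the paper, but your two-regime version both identifies the correct range of validity ($\beta\ge2$, with rate $\tau^{-1/\beta}$ otherwise) and closes the large-$\tau$ loophole, whereas the paper's proof is airtight only in the case $\beta=2$ that it actually needs.
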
 
\begin{proof} It is obvious 
\[
\begin{split}
\sum_{k=1}^\infty e^{-c_0k^\be  \tau } \le & 
 \sum_{k=1}^\infty \int_{k-1}^{k } e^{-c_0x^\be  \tau } dx =\int_0^\infty e^{-c_0x^\be  \tau } dx   \\
	=& \tau^{-1/\beta} \int_0^\infty e^{-c_0x^\be   } dx 
	=C\tau^{-1/\beta}\,. 
\end{split}
\]
This proves the lemma. 
	\end{proof}
%
We need the following computations in the proof of next theorem.

To estimate $I_1$, we first  extend \eqref{e.2.9a}.  For any diagonal
matrix $D=\diag(d_1, d_2, \cdots, \cdots)$, we have 
\ba
\bigg( \sum_{i,j=1}^\infty  \al _{ij}    A_i^T D   A_j  \bigg)_{k,m}
&=&  \sum_{i,j,\ell=1}^\infty \al_{ij}     a_{i\ell k} d_\ell  a_{j\ell m}
\nonumber \\
&=& \sum_{i,j,\ell=1}^\infty   \int_{\cO^4} q(x,y) e_i(x) e_j(y) 
e_i(\xi) e_\ell (\xi) e_k(\xi) d_\ell e_j(\eta)   e_ \ell(\eta) e_m(\eta) dxdyd\xi d\eta	\nonumber \\ 
&=&    \sum_{ \ell=1}^\infty \int_{\cO^4} q(x,y) \de(x-\xi)   e_\ell (\xi)  e_\ell (\eta) \de(y-\eta) d_\ell 
e_k(\xi)     e_m(\eta) \d x\d y\d\xi \d\eta	\nonumber \\
&=&  \sum_{ \ell=1}^\infty \int_{\cO^2 } q(\xi,\eta)   d_ \ell e_\ell (\xi)
e_\ell (\eta) e_k(\xi)     e_m(\xi)  \d\xi \d \eta\,.  \label{e.3.18} 
\ea
We also have for any $M\ge 1$, 
\ba
\bigg( \sum_{i,j=M}^\infty  \al _{ij}    A_i^T D   A_j  \bigg)_{k,m}
&=&   \sum_{i,j=M} ^\infty \sum_{ \ell=1}^\infty    \al_{ij}     a_{i\ell k} d_\ell  a_{j\ell m}
\nonumber \\
&=& \sum_{i,j=M} ^\infty \sum_{ \ell=1}^\infty    \int_{\cO^4} q(x,y) e_i(x) e_j(y) 
e_i(\xi) e_\ell (\xi) e_k(\xi) d_\ell e_j(\eta)d_\ell  e_ \ell(\eta) e_m(\eta) dxdyd\xi d\eta	\nonumber \\
&=&  \sum_{ \ell=1}^\infty    \int_{\cO^2}  Q_M(  \xi,\eta)  e_\ell (\xi) e_k(\xi) d_\ell    e_ \ell(\eta) e_m(\eta)  d\xi d\eta\,, 	\label{e.3.18a}  \\ 
\ea
where 
\begin{equation} 
	Q_M( \xi, \eta)=\sum_{i,j=M}^\infty\int_{\cO^2} q(x,y) e_i(x)e_j(y) e_i(\xi)       e_j(\eta)  dxdy  \,. \label{e.def_Q} 
\end{equation}

To obtain the convergence of $U_{N,M}$ to $u$  we impose the following condition on  $\al_{ij}$. 
\begin{assumption}\label{assumption1} Let $\al_{ij}$ be defined by \eqref{e.2.2}. We assume 
	\begin{equation} 
		\begin{split}	
	\rho(M):=\sup_{\|f\|_2\le 1}		\int_{\cO^2}Q_M(\xi,\eta) f(\xi)f(\eta)\d\xi \d\eta
\to 0\quad \hbox{as $M\to \infty$}\,. 
		\end{split}
	\end{equation}  
\end{assumption}
\begin{theorem}\label{convergencetheorem} 
Let  the spatial dimension $d=1$ and let  $0<\gamma<1/2$ be given.  If  
\begin{equation}
	2(\la_1+\be_0) -\be_1^2\kappa>0\,,\
\end{equation}
then  there is a constant $C_T$ independent of $N$ and $M$ such that 
\[
\sup_{0\le t\le T}   	\||u(t)-U_{N,M}(t)\||_2^2\le C(\lambda_N^{- \gamma}+\rho(M))\,. 
\] 
\end{theorem}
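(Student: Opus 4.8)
The plan is to pass to the mild (variation-of-constants) formulation and to control the error in the norm $\||\cdot\||_2$ of \eqref{e.2.11} by splitting it into a deterministic semigroup part governed by $\la_N$, two truncation parts governed respectively by $\la_N^{-\gamma}$ and by $\rho(M)$ (Assumption \ref{assumption1}), and a self-referential propagation part that will be closed by the generalized Gronwall Lemma \ref{lemma}. Let $S(t)=e^{-t(\La+\be_0 I)}$ be the diagonal semigroup, so that $S(t)e_k=e^{-(\la_k+\be_0)t}e_k$ and $S(t)$ commutes with $P_N$. Writing \eqref{e.2.6} and \eqref{HeatSemimatrix1} in mild form,
\[
u(t)=S(t)u(0)+\be_1\int_0^t S(t-s)\sum_{j=1}^\infty A_j u(s)\,dB_j(s),
\]
\[
U_{N,M}(t)=S(t)P_N u(0)+\be_1\int_0^t S(t-s)\sum_{j=1}^M A_j^N U_{N,M}(s)\,dB_j(s),
\]
where I have extended $U_{N,M}$ by zeros and used that $S(t)$ and $S^N(t)=e^{-t(\La^N+B^N)}$ coincide on $S_N$. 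Setting $E(t):=u(t)-U_{N,M}(t)$, the deterministic contribution is $(I-P_N)S(t)u(0)$, whose square norm is $\sum_{k>N}e^{-2(\la_k+\be_0)t}u_k^2(0)\le\la_N^{-\gamma}\|(-\Delta)^{\gamma/2}u(0)\|_2^2$, using $\la_k^{-\gamma}\le\la_N^{-\gamma}$ for $k>N$ and the assumed regularity of $u(0)$.

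For the stochastic convolution difference, the crucial algebraic step is to write the integrand difference as
\[
\sum_{j=1}^\infty A_j u(s)-\sum_{j=1}^M A_j^N U_{N,M}(s)
=\sum_{j=1}^M A_j^N E(s)+\sum_{j=1}^M\big(A_j-A_j^N\big)u(s)+\sum_{j>M}A_j u(s),
\]
and to observe that, since $A_j^N$ maps $S_N$ into $S_N$ and $S(t-s)=S^N(t-s)$ there, the single semigroup $S(t-s)$ may be factored in front of all three pieces. I would then take second moments, applying the It\^o isometry for the correlated family $\{B_j\}$ with covariance $\al_{ij}$, which turns each quadratic form into $\sum_{i,j}\al_{ij}\,c_i^T S(t-s)^2 c_j$ for the respective coefficient vectors $c_j$; here $S(t-s)^2=\diag(e^{-2(\la_\ell+\be_0)(t-s)})$ plays the role of the diagonal matrix $D$ in \eqref{e.3.18} and \eqref{e.3.18a}, so that each form collapses, via the trace-type identities \eqref{e.2.9a}, \eqref{e.3.18}, \eqref{e.3.18a} and the kernel $Q_M$ of \eqref{e.def_Q}, to a spatial integral against $q$ or $Q_M$ weighted by the factors $e^{-2(\la_\ell+\be_0)(t-s)}$.

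The three pieces are then estimated separately. The propagation piece reproduces the computation of Theorem \ref{theoremsemi1} but with the extra diagonal weight, yielding a bound of the form $\int_0^t K(t-s)\,\||E(s)\||_2^2\,ds$ where $K(t-s)\le C\sum_{\ell\ge1}e^{-2\la_\ell(t-s)}\le C\,(t-s)^{-1/2}$; the final inequality uses $d=1$, hence $\la_\ell\sim \ell^2$, together with \eqref{e.3.17}. The piece $\sum_{j\le M}(A_j-A_j^N)u(s)$ only involves modes $k>N$ of $u(s)$, so after the same smoothing it is dominated by $\la_N^{-\gamma}$ provided the spatial tail of the solution is controlled, i.e. provided $\sup_{0\le t\le T}\EE\|(-\Delta)^{\gamma/2}u(t)\|_2^2<\infty$. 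Finally, the piece $\sum_{j>M}A_j u(s)$ produces exactly the kernel $Q_M$ and is therefore bounded by $C\rho(M)$ through Assumption \ref{assumption1}.

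Collecting everything gives
\[
\||E(t)\||_2^2\le C\big(\la_N^{-\gamma}+\rho(M)\big)+C\int_0^t (t-s)^{-1/2}\,\||E(s)\||_2^2\,ds ,
\]
and Lemma \ref{lemma} with $\alpha=1/2$, $\beta=0$ converts this into the claimed bound $\sup_{0\le t\le T}\||E(t)\||_2^2\le C_T(\la_N^{-\gamma}+\rho(M))$. I expect the main obstacle to be twofold. First, one must establish the spatial regularity estimate $\sup_{0\le t\le T}\EE\|(-\Delta)^{\gamma/2}u(t)\|_2^2<\infty$ that underlies both the deterministic and the $N$-truncation bounds; this is exactly what confines the theorem to $d=1$ and $\gamma<1/2$, since in one dimension the stochastic convolution gains only $1/2-\varepsilon$ spatial derivatives. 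Second, one must extract the \emph{integrable} singular kernel $(t-s)^{-1/2}$ from the correlated It\^o isometry while carrying the noise correlation $\al_{ij}$ through every step; it is precisely the non-integrability of $\sum_\ell e^{-c_0\la_\ell(t-s)}\sim (t-s)^{-d/2}$ for $d\ge2$ that breaks the argument in higher dimensions, and the resulting weakly singular kernel is what forces the use of the generalized Gronwall Lemma \ref{lemma} in place of the classical one.
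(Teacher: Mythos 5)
Your proposal follows the same route as the paper's proof in all essentials: mild formulation, It\^o isometry for the correlated family $\{B_j\}$ reduced to spatial integrals through the trace identities \eqref{e.2.9a}, \eqref{e.3.18}, \eqref{e.3.18a} and the kernel $Q_M$ of \eqref{e.def_Q}, an integrable singular kernel obtained from $\sum_\ell e^{-c\la_\ell (t-s)}$ via \eqref{e.3.17} (this is where $d=1$ enters), and closure by the generalized Gronwall Lemma \ref{lemma}. The only structural difference is that you estimate $u-U_{N,M}$ in one pass with a three-way splitting of the stochastic integrand, while the paper inserts the intermediate approximation $U_N$ of \eqref{HeatSemimatrix} and runs the same machinery twice; your third piece $\sum_{j>M}A_ju$ giving $\rho(M)$ corresponds to the paper's $I_7$, and this difference is cosmetic. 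Two points, however, are genuinely defective as written. First, the assertion that $\sum_{j\le M}(A_j-A_j^N)u(s)$ ``only involves modes $k>N$ of $u(s)$'' is false: $A_j-A_j^N$ vanishes only on the upper-left $N\times N$ block, so its rows $k\le N$ involve the input modes $i>N$ of $u$ (these are indeed the ones controlled by spatial regularity), but its rows $k>N$ involve \emph{all} modes of $u$. For those rows the smallness cannot come from the tail of $u$; it comes from the semigroup damping $e^{-2(\la_k+\be_0)(t-s)}$ with $k>N$, which after time integration produces $\sum_{k>N}(\la_k+\be_0)^{-1}\le C\la_N^{-1/2}$ in $d=1$ --- exactly the paper's estimate of its term $I_4$. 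The fix is cheap, but your stated bound for this piece does not stand as written.

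Second, and more substantively, your entire control of the initial-data term and of the $N$-truncation piece hinges on the regularity estimate $\sup_{0\le t\le T}\EE\|(-\De)^{\ga/2}u(t)\|_2^2<\infty$, which you flag as an ``expected obstacle'' but never prove. This estimate is not a side issue: it is the main analytic content of the theorem, and the paper's proof is devoted largely to supplying it in mode-wise form. Concretely, the paper uses the mild equation for each mode, the Burkholder--Davis--Gundy inequality, the trace identity, and $\sup_{x,y\in\cO}|q(x,y)|<\infty$ to prove $\EE\,|(\la_k+\be_0)^{1/2}u_{N,k}(t)|^q\le C$ uniformly in $k$ (the bound $I_{2,k}\le C$ in \eqref{I22}), and then sums $\sum_{k>N}(\la_k+\be_0)^{1-2/p}$ with $2/p+\ga>5/2$ to obtain the rate $\la_N^{-(1-\ga)}$, i.e.\ $\la_N^{-\ga_0}$ for any $\ga_0<1/2$. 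Your assumed regularity is equivalent in strength to this mode-wise bound (sum it and use $\la_k\sim k^2$ together with $\ga<1/2$), so nothing you assert is untrue; but until you prove that estimate --- or reproduce the paper's mode-wise argument --- your proposal is a correct reduction of the theorem to an unproved lemma rather than a proof.
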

\begin{proof}
By triangle inequality we can separate the error $\||u(t)-U_{N,M}(t)\||_2^2$ into two terms as follows
\begin{equation*}
	\||u(t)-U_{N,M}(t)\||_2^2\leq 2 \||u(t)-U_{N}(t)\||_2^2+2\||U_{N}(t)-U_{N,M}(t)\||_2^2.
\end{equation*}
In order to estimate $\||u(t)-U_{N}(t)\||_2^2$, we consider an infinite extension
\begin{equation}
	\d  \hat{U}_{N}(t)= -(\hat{\La}^N +\hat{B}^N)  \hat{U}_{N}(t) \d t +\beta_1 \sum_{j=1 }^\infty     \hat{A}^N_j \hat{U}_{N}(t)\d B_j(t),
	\label{e.3.16} 
\end{equation}
where  $\hat{\La}^N=\diag (\lambda_1, \cdots, \lambda_{N},0,\cdots)$, $\hat{B}^N=\diag (\beta_0, \cdots, \beta_0,0,\cdots)$, $\hat{A}^N_j=(\langle e_je_ie_k\rangle )_{1\le i,k\le N}$ and $\hat{A}^N_j= (0)_{i,\hbox{or}\ k\geq N}.$
Clearly, the equality $\||u(t)-U_{N}(t)\||_2^2=\||u(t)-\hat{U}_{N}(t)\||_2^2$ 
still  holds true.

In the following,  we denote $Z_N(t)=u(t)-\hat{U}_{N}(t)=(z_1(s), \cdots, z_i(s),\cdots)$.
Subtracting \eqref{e.3.16} from (\ref{e.2.6}) yields
\begin{equation*}
	\begin{split}
		\d  Z_N(t)= & -(\La+B) Z_{N}(t) \d t +\beta_1 \sum_{j=1 }^\infty     A _j Z_{N}(t)\d B_j(t)  \\
		&\quad -(\La  -\La_N )  \hat U_{N}(t) \d t-(B-\hat B_N) \hat U_{N}(t) \d t 
		+ \beta_1 \sum_{j=1 }^\infty     (A _j -\hat A_j^N)\hat U_{N}(t)\d B_j(t) \,.  
	\end{split} 
\end{equation*} 
Its integral form is 
\begin{equation*}
	\begin{split}
	 Z_N(t) =   & e^{-(\La+B)t} Z_N^0+\beta_1 \int_{0}^{t}e^{-(\La+B)(t-s)}\sum_{j=1}^\infty A_jZ_N(s) \d B_j(s) \\
		&+ \int_{0}^{t}e^{-(\La+B)(t-s)}(\La-\hat{\La}_N)\hat{U}_N(s) \d s  + \int_{0}^{t}e^{-(\La+B)(t-s)}(B-\hat B_N)\hat{U}_N(s) \d s \\
		&+ \beta_1\int_{0}^{t}e^{-(\La+B)(t-s)}\sum_{j=1}^\infty (A_j-\hat{A}_j^N)\hat{U}_N(s) \d B_j(s)\,,  
	\end{split}
\end{equation*}
where $Z_N^0$ is the initial value and $B=\diag(\beta_0,\cdots,\beta_0,\cdots)$.  Thus, we have 
\begin{equation}
	\begin{split}
		\| |Z_N(t)|\|_2^2= & 5^2\||e^{-(\La+B)t}Z_N^0|\|_2^2+5^2\beta_1^2 \||\int_{0}^{t}e^{-(\La+B)(t-s)}\sum_{j=1}^\infty A_jZ_N(s) \d B_j(s)|\|_2^2\\
		&+5^2\||\int_{0}^{t}e^{-(\La+B)(t-s)}(\La-\hat{\La}_N)\hat{U}_N(s) \d s|\|_2^2\\
		&+5^2\||\int_{0}^{t}e^{-(\La+B)(t-s)}(B-\hat B_N)\hat{U}_N(s) \d s|\|_2^2\\
		&+5^2\beta_1^2  \||\int_{0}^{t}e^{-(\La+B)(t-s)}\sum_{j=1}^\infty (A_j-\hat{A}_j^N)\hat{U}_N(s) \d B_j(s)|\|_2^2\\
		&\le  C (I_0+I_1+I_2+I_3+I_4)\,. 
		\label{Convergence1}
	\end{split}
\end{equation}

Next, we bound these five  terms separately.  If the initial values for    $u(t)$ and $U_{N,M}(t)$ are chosen to be the same,  then $I_0=0$.

The term $I_1$ can be bounded by utilizing the It\^o isometry,  and  the above identity
\eqref{e.3.18}, that is  
\begin{equation}
	\begin{split}
		I_1&=\EE (\int_0^t e^{-(\La+B)(t-s)}\sum_{j=1}^\infty A_j Z_N(s)\d B_j(s))^2\\
		& =   \EE\int_0^t(  Z_N(s))^T
		\sum_{m,j=1}^\infty \al_{mj} A_j  e^{-2(\La+B)(t-s)}A_m Z_N(s))\d s\\
		&=   	\sum_{m,j,\ell =1}^\infty  \int_{0}^t \int_{\cO^2 }
		q(\xi,\eta)  e^{-2(\la_ \ell+\be_0)(t-s)}  e_\ell (\xi)
		e_\ell (\eta) e_j(\xi)     e_m(\xi) z_m(s) z_j(s)  \d\xi \d \eta \d s \\
		&=   	\sum_ {\ell =1}^\infty  \int_{0}^t \int_{\cO^2 }
		q(\xi,\eta)  e^{-2(\la_ \ell+\be_0)(t-s)}  e_\ell (\xi)
		e_\ell (\eta)  Z_N(s,\xi) Z_N(s,\eta) \d\xi \d \eta \d s \,. 
	\end{split}
\end{equation} 
Now  by   H\"older's inequality,  $\sup_{x,y \in \cO  }
|q(x,y)|\le C$,  and $\sup_{ x\in \cO} |e_k(x)|\le C$,  we have  
\ba
I_1
&\leq& C\EE \int_{0}^t (\int_{\cO^2} q^2(x,y)Z_N^2(s,x)Z_N^2(s,y)\d x \d y)^{\frac{1}{2}}
\nonumber \\
&&\qquad \int_{0}^t (\int_{\cO^2}(\sum_{k =1}^\infty e^{-2(\lambda_k+\beta_0)(t-s)}e_k(x)e_k(y))^2 \d x\d y)^{\frac{1}{2}} \d s
\nonumber  \\
&\leq&  C \int_{0}^t \||Z_N(s)|\|_2^2 (\sum_{k =1}^\infty e^{-4(\lambda_k+\beta_0)(t-s)} )^{\frac{1}{2}} \d s \nonumber \\
&\leq&  C \int_{0}^t \||Z_N(s)|\|_2^2 (\sum_{k =1}^\infty e^{-4c_0k^2(t-s)} )^{\frac{1}{2}} \d s \nonumber \\
&\le& C\int_0^t (t-s)^{-\frac{1}{4}}\||Z_N(s)|\|_2^2\d s \,, 
\ea
since $\la_k\asymp c_0k^2$ as $k\to \infty$ and  where the last inequality follows from \eqref{e.3.17}  with $\beta=2$. 
Before  we   proceed  with the estimate of $I_2$, we need to obtain   some more bounds  for  the solution.  For any positive $k\ge 1$, 
using  (\ref{e.3.2}),   Theorem \ref{exactstability}  and the Burkholder-Davis-Gundy inequality we see 
\ba 
I_{2,k} &:= & \EE |(\lambda_k+\beta_0)^{\frac{1}{2}}u_{N, k}(t)|^q
\nonumber \\
&\leq&  C\EE |(\lambda_k+\beta_0)^{\frac{1}{2}}e^{-(\lambda_k+\beta_0)t}u_{N, k}(0)|^q\nonumber  \\
&&\qquad +C \EE |\sum_{j =1}^\infty \int_0^t (\lambda_k+\beta_0)^{\frac{1}{2}}e^{-(\lambda_k+\beta_0)(t-s)}\sum_{i=1}^\infty   a_{ jik }u_{N, i}(s)\d B_j(s)|^q\nonumber  \\
&	\leq&  C + C \EE(\int_{0}^t (\lambda_k+\beta_0)e^{-2(\lambda_k+\beta_0)(t-s)}\nonumber\\
&&\qquad\times \sum_{m,j=1}^\infty \al_{mj}(\sum_{i=1}^ \infty a_{jik }u_{N, i}(s))(\sum_{i=1} ^\infty a_{mi'k}u_{N, i}(s))\d s)^{\frac{q}{2}}\nonumber\\
&=&  C + C \EE(\int_{0}^t (\lambda_k+\beta_0)e^{-2(\lambda_k+\beta_0)(t-s)}
\tilde I_{2,k}(s)ds)^{q/2}   \,,  \nonumber 
\ea
where we have as in \eqref{e.alA} 
\bas 
\tilde I_{2,k}(s)
&=&   \sum_{i, i'=1}^\infty  \sum_{m,j=1}^\infty \al_{mj} a_{jik }a_{mi'k} u_{N, i}(s)  u_{N, i'}(s) \\ 
&=&   \sum_{i, i'=1}^ \infty  \int_{\cO^2} 
q(\xi, \eta) e_i(\xi)e_k(\xi )
e_{i'}(\eta) e_k(\eta )d\xi d\eta \,  
 u_{N, i}(s)  u_{N, i'}(s)\\
&=&     \int_{\cO^2} 
q(\xi, \eta) e_k(\xi )
  e_k(\eta )d\xi d\eta \,  
u_{N }(s,\xi)  u_{N }(s, \eta)\,. 
\eas 
Therefore, we have 
\ba 
I_{2,k}  
&\leq & C+C\EE(\int_{0}^t (\lambda_k+\beta_0)e^{-2(\lambda_k+\beta_0)(t-s)} \int_{\cO^2}q(x,y)e_k(x)e_k(y)u(t,x)u(t,y)\d x \d y \d s)^{\frac{q}{2}}
\nonumber  \\
&\leq &  C+  C (\int_{0}^t (\lambda_k+\beta_0)e^{-2(\lambda_k+\beta_0)(t-s)}(\int_{\cO^2}e_k^2(x)e_k^2(y)\d x\d y)^{\frac{1}{2}}  \nonumber\\
&&\qquad\quad \EE (\int_{\cO^2}q^2(x,y)\hat u_N^2(s,x)\hat u_N^2(s,y)\d x \d y)^{\frac{1}{2}} \d s)^{\frac{q}{2}}\nonumber  \\
&\leq&  C+C (\int_{0}^t (\lambda_k+\beta_0)e^{-2(\lambda_k+\beta_0)(t-s)} \d s)^{\frac{q}{2}}  \leq  C\,,    
\label{I22}  
\ea
where  in the  last inequality we used the fact that $\sup_{x,y \in \cO  }
|q(x,y)|\le C$ and $\sup_{t,x\in \RR_+\times \cO}\EE |U_N(t,x)|^q\le C$.  
For any conjugate numbers 
$1/p+1/q=1$, we have by H\"older's inequality 
\ba
I_2&=&\EE \sum_{k=N+1}^\infty (\int_{0}^t e^{-(\lambda_k+\beta_0)(t-s)}(-(\lambda_k+\beta_0))u_{N, k}(s)\d s)^2\nonumber \\
&\leq& \sum_{k=N+1}^\infty (\int_{0}^t e^{-p(\lambda_k+\beta_0)(t-s)}(\lambda_k+\beta_0)^{\frac{p}{2}}\d s)^{\frac{2}{p}}\EE(\int_{0}^t (\lambda_k+\beta_0)^{\frac{q}{2}}|u_{N, k}(s)|^q\d s)^{\frac{2}{q}} \nonumber \\
&\leq&  \sum_{k=N+1}^\infty (\lambda_k+\beta_0)^{1-\frac{2}{p}}(\int_{0}^t\EE (\lambda_k+\beta_0)^{\frac{q}{2}}|u_{N, k}(s)|^q\d s)^{\frac{2}{q}}\nonumber \\
&:=& \sum_{k=N+1}^\infty ((\lambda_k+\beta_0)^{1-\frac{2}{p}}(\int_{0}^t I_{2,k} \d s)^{\frac{2}{q}}.
\label{I2} 
\ea
By \eqref{I22},   we have 
\begin{equation}\label{II2}
	I_2\leq C \sum_{k=N+1}^\infty (\lambda_k+\beta_0)^{1-\frac{2}{p}}.
\end{equation}

$I_3$ can be done with similar    (in fact  easier) way as  for $I_2$.
It can be bounded by something better   but   at least can be 
bounded by \eqref{II2}. We have
\begin{equation}\label{II3}
	I_3\leq C \sum_{k=N+1}^\infty (\lambda_k+\beta_0)^{1-\frac{2}{p}}.
\end{equation}

In the sequel, we turn to  the estimate of $I_4$.   By similar estimation as $I_1$,  with the aid of the   Burkholder-Davis-Gundy-type inequality and Theorem \ref{theoremsemi1}, we get
\begin{align*}
	I_4&=\EE (\int_{0}^{t}e^{-(\La+B)(t-s)}\sum_{j=1}^\infty (A_j-\hat{A}_j^N) {U}_N(s) \d B_j(s))^2\\
	&\leq C\EE \int_{0}^{t}\sum_{k=N+1}^{\infty}e^{-2(\lambda_k+\beta_0)(t-s)}
	\sum_{m,j=1}^\infty(\sum_{i=1}^\infty a_{kji}u_{N, i}(s) )(\sum_{i=1}^\infty a_{kmi}u_{N, i}(s)) \al_{mj} \d s\\
	& \leq  C\int_{0}^{t} \sum_{k=N+1}^{\infty} e^{-(\lambda_{k}+\beta_0)(t-s)}\||  {U}_N(s)|\|_2^2 \d  s\leq C \sum_{k=N+1}^{\infty} (\lambda_k+\beta_0)^{-1}.
\end{align*}
Since $p>1$ we see  $1-2/p\ge -1$.  It follows from the above  (\ref{Convergence1})-(\ref{II2})    that 
\begin{equation}
	\begin{split}
		\| |Z_N(t)|\|_2^2 & \leq C \sum_{k=N+1}^{\infty} (\lambda_k+\beta_0)^{1-\frac{2}{p}}+C\int_0^t (t-s)^{-\frac{1}{4}}\||Z_N(s)|\|_2^2\d s\,.  
	\end{split}\label{e.3.30}
\end{equation} 
Thus, by  Lemma \ref*{lemma}  we obtain 
\begin{equation}
	\begin{split}
		\| |Z_N(t)|\|_2^2 
		&\leq C \sum_{k=N+1}^{\infty} (\lambda_k+\beta_0)^{1-\frac{2}{p}} E_{ \frac{3}{4},1}(\Ga( \frac{3}{4})^{\frac{1}{2}}t)\\
		&\leq C \sum_{k=N+1}^{\infty} (\lambda_k+\beta_0)^{1-\frac{2}{p}}\\
		&\leq  C \sum_{k=N+1}^{\infty} (\lambda_k+\beta_0)^{-1+\gamma} (\lambda_k+\beta_0)^{-\frac{2}{p}-\gamma+2}\\
		&\leq C (\lambda_{N+1}+\beta_0)^{-1+\gamma} \leq C \lambda_{N}^{-1+\gamma}\,, 
	\end{split}
\end{equation} 
where   $\frac{2}{p}+\gamma>5/2$, $0<\gamma<1$ and where in the 
above second last inequality, we used the fact that $\la_k\sim k^2$ as 
$k\to \infty$ which implies that $\sum_{k=N+1}^{\infty}   (\lambda_k+\beta_0)^{-\frac{2}{p}-\gamma+2}<\infty$.  Now we look at the condition
for $\gamma$.  For any $\gamma>1/2$, we can choose $p>1$ 
such that	 $\frac{2}{p}+\gamma>5/2$. Thus, we have for any $\ga_0=1-\ga<1/2$, we have
\begin{equation} 
	\| |Z_N(t)|\|_2^2   \leq C \lambda_{N}^{-\gamma_0}\,. 
\end{equation}

We are now ready to  estimate $\||U_N(t)-U_{N,M}(t)\||_2^2$.  
Denote  $K_{N,M}(t)=\hat{U}_{N}(t)-\hat{U}_{N,M}(t)$.
Subtracting \eqref{HeatSemimatrix1}  from \eqref{HeatSemimatrix} yields 
\ba
\| |K_{N,M}(t)|\|_2^2
&= & 3^2\||e^{-(\hat{\La}^N +\hat{B}^N)t}K_{N,M}^0|\|_2^2+3^2\be_1^2\||\int_{0}^{t}e^{-(\hat{\La}^N +\hat{B}^N)(t-s)}\sum_{j=1}^\infty \hat{A}_j^NK_{N,M}(s) \d B_j(s)|\|_2^2
\nonumber \\
&&\qquad +3^2\be_1^2 \||\int_{0}^{t}e^{-(\hat{\La}^N +\hat{B}^N)(t-s)}\sum_{j=M+1}^\infty \hat{A}_j^N\hat{U}_{N,M}(s) \d B_j(s)|\|_2^2 	\nonumber \\
& \le  & C( I_5+I_6+I_7),
\label{Convergence2}
\ea
where $K_N^0$ is the initial value. 
Again,  we choose the same initial values for   $\hat{U}_{N}(t)$ and $\hat{U}_{N,M}(t)$ so that  $I_5=0$.
The term $I_6$ can be similarly bounded with $I_1$, that is
\begin{equation*}
	I_6 \leq C\int_0^t (t-s)^{-\frac{1}{4}}\||K_{N,M}(s)|\|_2^2\d s.
\end{equation*}

Regard the term $I_7$,  by the computation  
\eqref{e.3.18a},  under the Assumption \ref{assumption1} and Inequality \eqref{e.3.15} of Theorem \ref{Theoremfinite}, we derive 
\begin{equation}
		\begin{split}
			I_7&\leq  C\int_{0}^t \sum_{j,m=M+1}^\infty \al_{mj} (e^{-(\hat{\La}^N +\hat{B}^N)(t-s)}\hat{A}_j^N \hat{U}_{N,M}(s))^T(e^{-(\hat{\La}^N +\hat{B}^N)(t-s)}\hat{A}_m^N \hat{U}_{N,M}(s)) \d s \\
			&\leq C\int_{0}^t \sum_{k=1}^N \sum_{i ,j=1}^\infty  e^{-2(\lambda_k+\beta_0)(t-s)}    \int_{\cO^2} Q_M(x,y) e_k(x)e_i(x)e_k(y)e_{j} (y)\hat{u}_{N,M}(s,x)\hat{u}_{N,M}(s,y)\d x\d y\d s\\
			&\leq C\int_{0}^t \sum_{k=1}^N   e^{-2(\lambda_k+\beta_0)(t-s)}    \int_{\cO^2} Q_M(x,y) e_k(x) e_k(y) \hat{u}_{N,M}(s,x)\hat{u}_{N,M}(s,y)\d x\d y\d s\\
				&\leq C  \rho(M) \sup_{ k\ge 1\,,  x\in \cO} |u_k(x)|^2 \int_{0}^t \sum_{k=1}^N   e^{-2(\lambda_k+\beta_0)(t-s)}     \|\hat{u}_{N,M}(s )\|_2^2   \d s\le C \rho(M) 
		\end{split}
	\end{equation} 
where the last inequality follows from Theorem \ref{Theoremfinite}.   This gives 
\begin{equation*}
	\||K_{N,M}(t)|\|_2^2 \leq C\int_0^t (t-s)^{-\frac{1}{4}}\||K_{N,M}(s)|\|_2^2\d s
	+  C \rho(M).
\end{equation*}
By  utilizing Lemma \ref{lemma} as   before  we have
\begin{equation}
	\begin{split}
		\| |K_{N,M}(t)|\|_2^2 
		&\leq C \rho(M) E_{ \frac{3}{4},1}(\Ga( \frac{3}{4})^{\frac{3}{4}}t) \leq C \rho(M).
	\end{split}
\end{equation} 
Finally,  gathering the above estimates gives
\[
\sup_{0\le t\le T}   	\||u(t)-U_{N,M}(t)\||_2^2\le C(\lambda_N^{-1+\gamma}+\rho(M))\,. 
\] 
This completes the proof of the theorem. 
\end{proof}
\begin{example}
If we choose the  domain $\cO$ as  $[0,1]$, then the eigenvalues and eigenvectors of the Laplace operator $-\Delta$ with the Dirichlet boundary condition are $\lambda_k=k^2 \pi^2$, $e_k(x)=\sqrt{\frac{2}{\pi}}\sin k\pi x$. Particularly,  if    the   eigenvectors of   the covariance $q(x,y)$ is also $\{e_k(x), k=1, 2, \cdots\}$,  and if $q(x,y)$ has   continuous differentiable derivatives up to order $2\ell$ with zero values on the boundary of $[0, 1]^2$,  then  
integration by parts $2\ell$ times gives (we denote 
$\tilde e_i(x)=e_i(x)$   if $\ell $ is even and 
$\tilde e_i(x)=\sqrt{\frac2\pi} \cos(k\pi x)$ if   $\ell $ is odd)
\begin{equation}
	\begin{split}
	\rho(M) &\le C  \sum_{i,j=M}^\infty \left| \int_{[0, 1]^2} 
	q(x,y) e_i(x)e_j(y) dxdy\right|\\ 
&\le C  \sum_{i,j=M}^\infty {i^{-\ell} j^{-\ell} }\left| \int_{[0, 1]^2} 
\frac{\partial ^{2\ell }}{\partial x^\ell 
\partial y^\ell }q(x,y) \tilde e_i(x)\tilde e_j(y) dxdy\right|\\ 
&\le C  \sum_{i,j=M}^\infty {i^{-\ell} j^{-\ell} } 
\le \int_{M}^\infty \int_{M}^\infty (xy)x^{-\ell}dxdy=CM^{-2\ell +2}
\,. 
	\end{split}
\end{equation}
Thus, we have in this case 
\[
\sup_{0\le t\le T}   	\||u(t)-U_{N,M}(t)\||_2^2\le C(N^{-2+2\gamma}+M^{-2\ell +2})\,.  
\]   
\end{example}

\section{Full discretization} \label{s.full}
We consider the uniform partition in time. Let
$t_j=j \tau $,  where $\tau$ is the stepsize. Applying the implicit Euler method to (\ref{HeatSemimatrix1}) leads to
\begin{equation}\label{HeatFull}
U_{n+1}= U_n - \tau \La^N U_{n+1} - \tau B^N U_{n+1}+\beta_1 \sum_{j=1}^M  A^N_j U_n\Delta B_j^n,
\end{equation}
where we follow the notation  of (\ref{HeatSemimatrix1}),  $U_0=P_N u_0$,   $\Delta B_j^n= B_j(t_{n+1})-B_j(t_n)$. 
$U_n$ is an $\RR^N$-valued random variable and is the approximation of   solution $U(t_n)$.
To study the stability of the random sequence of $U_n$ as $n$ goes to infinity
we define 
\[
\|U_n\|_2^2 =U_n^TU_n
\]
and
\[
\|A\|_2=\sum_{x\in \RR^N\,, \|x\|_2=1}\|Ax\|_2
\]
for a matrix from $\RR^N$ to $\RR^N$ or from $\RR^\infty$ to $\RR^\infty$. 

Our next result is 
\begin{theorem}\label{Heattheorem}
Under the condition (\ref{e.2.12}), the numerical solutions generated by (\ref{HeatFull}) are mean-square stable in the  norm sense  for any stepsize $\tau$.
\end{theorem}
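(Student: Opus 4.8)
The plan is to rewrite the implicit one-step map as an explicit linear recursion and then show that the second moment is contracted by a factor strictly below one at every step, uniformly in $n$ and for every stepsize $\tau>0$. Setting $P:=I+\tau(\La^N+B^N)$, a diagonal symmetric positive definite matrix with entries $1+\tau(\la_k+\be_0)$ (positivity of $\la_1+\be_0$ being guaranteed by \eqref{e.2.12}), the scheme \eqref{HeatFull} solves uniquely for $U_{n+1}$, giving
\[
U_{n+1}=P^{-1}U_n+\be_1\,P^{-1}\sum_{j=1}^M A_j^N U_n\,\Delta B_j^n.
\]
Since $U_n$ is $\cF_{t_n}$-measurable while the increments $\Delta B_j^n$ are independent of $\cF_{t_n}$ with $\EE[\Delta B_i^n\Delta B_j^n]=\al_{ij}\tau$, I would square, take the conditional expectation given $\cF_{t_n}$, and use $\EE[\Delta B_j^n\mid\cF_{t_n}]=0$ to kill the cross term. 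This leaves
\[
\EE\big[\|U_{n+1}\|_2^2\mid\cF_{t_n}\big]=U_n^TP^{-2}U_n+\be_1^2\tau\sum_{i,j=1}^M\al_{ij}\,U_n^T(A_i^N)^TP^{-2}A_j^NU_n.
\]

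I would then bound the two terms separately. The first is immediate: $P^{-2}$ is diagonal with largest entry $(1+\tau(\la_1+\be_0))^{-2}$, so $U_n^TP^{-2}U_n\le(1+\tau(\la_1+\be_0))^{-2}\|U_n\|_2^2$. The second term is the delicate one, because $P^{-2}$ is sandwiched between $(A_i^N)^T$ and $A_j^N$ and its norm cannot be factored out termwise. Here I would combine the Loewner bound $P^{-2}\preceq(1+\tau(\la_1+\be_0))^{-2}I$ with positive semidefiniteness of the covariance block $(\al_{ij})_{i,j=1}^M$: applying the Kronecker inequality $(\al_{ij})\otimes P^{-2}\preceq(1+\tau(\la_1+\be_0))^{-2}\,(\al_{ij})\otimes I$ to the stacked vector $(A_1^NU_n,\dots,A_M^NU_n)^T$ reduces the sandwiched sum to $\sum_{i,j=1}^M\al_{ij}U_n^T(A_i^N)^TA_j^NU_n$. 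This is precisely the spatial quadratic form computed in \eqref{e.alA}--\eqref{e.3.7}, and it is bounded by $\kappa\|U_n\|_2^2$; the finite truncations in both $N$ and $M$ only shrink the constant relative to $\kappa$, as in the comparison arguments of Section \ref{s.spatial}.

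Putting the two estimates together and taking a further expectation yields the one-step inequality $\EE\|U_{n+1}\|_2^2\le\theta(\tau)\,\EE\|U_n\|_2^2$ with
\[
\theta(\tau)=\frac{1+\be_1^2\kappa\tau}{\big(1+\tau(\la_1+\be_0)\big)^2}.
\]
The decisive step is to check $\theta(\tau)<1$ for all $\tau>0$: clearing the denominator and cancelling one factor of $\tau$ turns this into $\be_1^2\kappa<2(\la_1+\be_0)+\tau(\la_1+\be_0)^2$, which holds since the extra term $\tau(\la_1+\be_0)^2$ is nonnegative and \eqref{e.2.12} already gives $\be_1^2\kappa<2(\la_1+\be_0)$. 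Iterating then produces $\EE\|U_n\|_2^2\le\theta(\tau)^n\,\EE\|U_0\|_2^2\to0$, the asserted mean-square stability for arbitrary $\tau$. I expect the main obstacle to be the middle paragraph---extracting the spectral bound from the noise-weighted form with $P^{-2}$ inserted through the positivity of $(\al_{ij})$---whereas the role of implicitness is transparent at the end: the denominator grows quadratically in $\tau$, so $\theta(\tau)$ can never reach $1$ no matter how large the stepsize.
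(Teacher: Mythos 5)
Your treatment of the implicit step is algebraically sound but follows a genuinely different route from the paper. The paper never inverts $P=I+\tau(\La^N+B^N)$: it takes expectations of the scheme in the weighted form
\begin{equation*}
\EE\left[U_{n+1}^T\left(I+\tau(\La^N+B^N)\right)^2U_{n+1}\right]
=\EE\left[U_n^TU_n\right]+\tau\be_1^2\sum_{i,j}\hat\al_{M,ij}\,\EE\left[U_n^T(A_i^N)^TA_j^NU_n\right],
\end{equation*}
and then bounds the left-hand side from below by $(1+\tau(\la_1+\be_0))^2\,\EE\|U_{n+1}\|_2^2$, so the noise quadratic form appears with no matrix sandwiched inside it. Your inversion creates the $P^{-2}$ sandwich, and your Kronecker/Loewner step for removing it is valid --- it needs only positive semidefiniteness of $(\al_{ij})_{1\le i,j\le M}$, which follows from the positive definiteness of $q$ --- but it is an extra lemma that the paper's weighted-norm trick makes unnecessary. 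Both routes end at the same contraction factor and the same elementary check that it stays below $1$ for every $\tau>0$.

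The genuine gap is the assertion
\begin{equation*}
\sum_{i,j=1}^M\al_{ij}\,U_n^T(A_i^N)^TA_j^NU_n\le\kappa\,\|U_n\|_2^2,
\end{equation*}
justified by ``finite truncations in both $N$ and $M$ only shrink the constant relative to $\kappa$, as in the comparison arguments of Section \ref{s.spatial}.'' That is not what those arguments say, and the claim is false in general. The bound by $\kappa$ in \eqref{e.2.9} uses the completeness relation $\sum_{\ell=1}^\infty e_\ell(\xi)e_\ell(\eta)=\de(\xi-\eta)$, which truncation destroys: after truncating, the form equals $\int_{\cO^2}\hat q_{M,N}(\xi,\eta)\hat U_{N,M}(\xi)\hat U_{N,M}(\eta)\,d\xi\,d\eta$ with a genuinely off-diagonal kernel, and the paper bounds it by $\tilde\kappa_2\|U_n\|_2^2$ with $\tilde\kappa_2=\|\hat q_{M,N}\|_2$ (proof of Theorem \ref{Theoremfinite}, reused verbatim in the proof of Theorem \ref{Heattheorem}). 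No inequality $\tilde\kappa_2\le\kappa$ is proved anywhere, and Remark \ref{r.3.2} records the opposite-direction inequality $\kappa\le\tilde\kappa_1$. Indeed truncation can \emph{increase} the constant: on $\cO=(0,1)$ take $q(x,y)=\phi(x)\phi(y)$ with $\phi=e_1+\tfrac15 e_3$ (positive, positive semidefinite; add an arbitrarily small strictly positive definite kernel if strictness is insisted on), so that $\kappa=\sup_\xi\phi(\xi)^2\approx1.52<2$. Truncating the noise at $M=2$ reduces the form to $\|P_N(e_1v)\|_2^2$, and taking $N$ large and $v\in S_N$ concentrated near $\xi=1/2$ drives this arbitrarily close to $2\|v\|_2^2>\kappa\|v\|_2^2$: spectral projections obey no maximum principle, so the monotonicity you invoke cannot be repaired. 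The fix is to run your argument with $\tilde\kappa_2$ in place of $\kappa$, giving $\theta(\tau)=(1+\be_1^2\tilde\kappa_2\tau)/(1+\tau(\la_1+\be_0))^2$, which is $<1$ for all $\tau$ under $2(\la_1+\be_0)>\be_1^2\tilde\kappa_2$, i.e.\ condition \eqref{e.3.15} rather than literally \eqref{e.2.12}; to be fair, this same imprecision is present in the paper's own statement of the theorem, whose proof also only delivers the contraction with $\tilde\kappa_2$.
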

\begin{proof}
 Considering the equivalent form of (\ref{HeatFull}), we have
	\begin{equation}\label{HeatFull}
		U_{n+1}= U_n - \tau \La^N U_{n+1} - \tau B^N U_{n+1}+\beta_1 \sum_{j=1}^\infty  A^N_j U_n\Delta  \cB_{j}^N.
	\end{equation}
Taking the expectation on above equation
\begin{equation*}
	\begin{split}
		\EE  \left[U_{n+1}^T (I+\tau(\La^N+B^N))^2U_{n+1}\right]
		=& \EE \left[ U_n^TU_n\right] + \tau \be_1^2 \sum_{i,j=1}^\infty \hat{\al}_{M,ij} \EE \left[  U_n^T(A^N_i)^T A^N_j U_n\right].
	\end{split}
\end{equation*} 
Under the condition (\ref{e.2.12}), $(I+\tau(\La^N+B^N))^2$ is a symmetric positive definite  matrix, then
\begin{equation*}
	\EE \left[ U_{n+1}^T (I+\tau(\La^N+B^N))^2 U_{n+1}\right] \geq (1+\tau(\lambda_1+\be_0))^2\EE \left[ U_{n+1}^T  U_{n+1}\right].
\end{equation*} 
  Similar to the proof of Theorem \ref{Theoremfinite}, we have 
\begin{equation*}
	\begin{split}
		\sum_{i,j=1}^\infty \hat{\al}_{M,ij}  \left[  U_{n}^T (A_i^N)^T   A_j^N U_{n}\right]  
		=& \sum_{k, m,\ell=1}^N  \int_{\cO^2} \hat{q}_M(\xi,\eta)   u_{N,M,k}(t_n)
		e_k(\xi)     u_{N,M,m}(t_n) e_m(\eta) e_\ell (\xi)  e_ \ell(\eta) \d\xi \d\eta\\
		=&     \int_{\cO^2} \hat{q}_{M,N}(\xi,\eta)   \left(\sum_{k  =1}^{ {\infty}} \hat{u}_{N,M,k}(t_n)
		e_k(\xi) \right)   \left(\sum_{  m =1}^{ {\infty}}  \hat{u}_{N,M,m}(t_n) e_m(\eta)\right)    \d\xi \d\eta  \\
		=&  \int_{\cO} \hat{q}_{M,N} (\xi,\eta)  \hat{U}_{N,M}(t_n, \xi)  \hat{U}_{N,M}(t_n, \eta)  \d\xi \delta\eta \\
		\le& \tilde \kappa_2   \int_{\cO}    (\hat{U}_{N,M}(t_n, \xi) ) ^2   \d\xi  \\
		=& \tilde \kappa_2  \||U_n\||_2^2.
	\end{split}
\end{equation*}
	As  a consequence,
	\begin{equation*}
		\||U_{n+1}\||_2^2 \le \frac{1+\tau \tilde \kappa_2 \be_1^2 }{(1+\tau(\lambda_1+\be_0))^2} \||U_n\||_2^2.
	\end{equation*}
	Thus, the numerical solutions are mean-square stable with any stepsize under the condition  (\ref{e.2.12}). The proof is hence completed. 
\end{proof}

\begin{remark}
Under  the	Neumann boundary condition, the numerical stability condition in Theorem  \ref{Heattheorem} is replaced by $	2\be_0 -\be_1^2\kappa>0$.
\end{remark}

\subsection{Other numerical method}
Applying the explicit Euler method to (\ref{HeatSemimatrix1}) leads to
\begin{equation}\label{Heattheta}
U_{n+1}= U_n  - \tau (\La^N+B^N )U_{n}+\sum_{j=1}^M  A^N_j U_n\Delta B_j^n.
\end{equation}
  \begin{theorem}
	The numerical solutions produced by (\ref{Heattheta}) are mean-square stable in the  norm sense if 
	\begin{equation}\label{Heatstabilitycondition}
		\lambda_{max}\{(I-\tau (\La^N+B^N))^2\}+\tau \tilde \kappa_2\beta_1^2 <1.
	\end{equation}
\end{theorem}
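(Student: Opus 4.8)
The plan is to follow the analysis of the implicit scheme in Theorem~\ref{Heattheorem} almost verbatim, the only structural difference being that the drift now acts multiplicatively through $I-\tau(\La^N+B^N)$ instead of being inverted. First I would rewrite \eqref{Heattheta} using the truncated-noise increments $\Delta\cB_j^n$ exactly as in the proof of Theorem~\ref{Heattheorem}, so that the sum over $j$ runs to $\infty$ with covariance $\hat{\al}_{M,ij}$, and put the scheme in the one-line form
\[
U_{n+1}=\big(I-\tau(\La^N+B^N)\big)U_n+\beta_1\sum_{j=1}^\infty A^N_j U_n\,\Delta\cB_j^n.
\]

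Next I would compute $\EE\|U_{n+1}\|_2^2=\EE[U_{n+1}^TU_{n+1}]$ by expanding the quadratic form. Since $U_n$ is $\cF_{t_n}$-measurable while each increment $\Delta\cB_j^n$ has mean zero and is independent of $\cF_{t_n}$, the cross term linear in the increments vanishes after taking expectation, and the purely stochastic term picks up the factor $\EE[\Delta\cB_i^n\Delta\cB_j^n]=\tau\hat{\al}_{M,ij}$. This yields
\[
\EE\|U_{n+1}\|_2^2=\EE\!\left[U_n^T\big(I-\tau(\La^N+B^N)\big)^2U_n\right]+\tau\beta_1^2\sum_{i,j=1}^\infty\hat{\al}_{M,ij}\,\EE\!\left[U_n^T(A^N_i)^TA^N_jU_n\right].
\]

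I would then bound the two terms separately. The deterministic quadratic form is controlled by the operator norm of the symmetric matrix $\big(I-\tau(\La^N+B^N)\big)^2$, giving the bound $\EE[U_n^T(I-\tau(\La^N+B^N))^2U_n]\le\lambda_{max}\{(I-\tau(\La^N+B^N))^2\}\,\EE\|U_n\|_2^2$. The stochastic quadratic form is precisely the one estimated in the proof of Theorem~\ref{Theoremfinite}; reusing that computation (which rewrites the double sum in terms of $\hat{q}_{M,N}$ and the extended vector $\hat U_{N,M}$) gives $\sum_{i,j}\hat{\al}_{M,ij}U_n^T(A^N_i)^TA^N_jU_n\le\tilde\kappa_2\|U_n\|_2^2$. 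Combining the two bounds produces
\[
\EE\|U_{n+1}\|_2^2\le\Big(\lambda_{max}\{(I-\tau(\La^N+B^N))^2\}+\tau\tilde\kappa_2\beta_1^2\Big)\,\EE\|U_n\|_2^2,
\]
so condition \eqref{Heatstabilitycondition} forces the amplification factor to be strictly less than $1$, whence $\EE\|U_n\|_2^2\to0$ geometrically, which is the claimed mean-square stability.

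The main point to watch is that, unlike the implicit scheme, the explicit method is only conditionally stable: the left-hand factor $(I+\tau(\La^N+B^N))^2$ that automatically exceeds $1$ in Theorem~\ref{Heattheorem} is here replaced by $(I-\tau(\La^N+B^N))^2$, whose largest eigenvalue grows without bound as $\tau$ increases. Hence condition \eqref{Heatstabilitycondition} implicitly encodes a step-size restriction on $\tau$ for fixed $N$, and the real content is checking that this single scalar inequality simultaneously tames both the drift amplification $\lambda_{max}\{(I-\tau(\La^N+B^N))^2\}$ and the noise contribution $\tau\tilde\kappa_2\beta_1^2$; the algebraic steps themselves are routine given the machinery already developed for Theorems~\ref{Theoremfinite} and \ref{Heattheorem}.
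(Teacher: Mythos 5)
Your proof is correct and follows exactly the template the paper itself uses: the paper actually states this explicit-Euler theorem without proof, and your argument is the natural adaptation of the paper's proof of Theorem \ref{Heattheorem} (rewrite the scheme with the truncated-noise increments, kill the cross term by independence of $\Delta\cB_j^n$ from $\cF_{t_n}$, bound the drift quadratic form by $\lambda_{max}\{(I-\tau(\La^N+B^N))^2\}$ and the noise term by $\tau\tilde\kappa_2\beta_1^2$ via the computation in Theorem \ref{Theoremfinite}). Your closing remark correctly identifies the substantive difference from the implicit case, namely that the condition now encodes a step-size restriction since $(I-\tau(\La^N+B^N))^2$ is not automatically contractive; note also that you rightly restored the factor $\beta_1$ in the noise term of \eqref{Heattheta}, which the paper omits by an apparent typo.
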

Clearly, the stability condition of the explicit Euler method is to restricted, since it is difficult to choose suitable coefficients to satisfy the condition, even for  given sufficient small coefficients, stepsize restrictions have to be imposed for the stiffness and much computational costs will be paid. Thus, a implicit scheme  for stiffness is  important.

Applying the stiff implicit-Euler method to (\ref{HeatSemimatrix1}) leads to
\begin{equation}\label{Heatstiff}
	U_{n+1}= U_n - \tau \La^N U_{n+1} - \tau B^N U_{n}+\beta_1 \sum_{j=1}^M  A^N_j U_n\Delta B_j^n.
\end{equation}
\begin{theorem}
	Under the condition (\ref{e.2.12}), the numerical solutions produced by (\ref{Heatstiff}) are mean-square stable in the spectral norm sense.
\end{theorem}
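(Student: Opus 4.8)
The plan is to mirror the mean-square argument of Theorem~\ref{Heattheorem}, the only structural difference being that the zeroth-order term $B^N$ is now treated \emph{explicitly} rather than implicitly. First I would rewrite \eqref{Heatstiff} with the full (infinite) noise expansion and the truncated increments $\Delta\cB_{M,j}^n$, exactly as in the proof of Theorem~\ref{Heattheorem}, to obtain
\[
(I+\tau\La^N)\,U_{n+1}=(I-\tau B^N)\,U_n+\be_1\sum_{j=1}^\infty A^N_j U_n\,\Delta\cB_{M,j}^n\,.
\]
Pairing each side with itself and taking the expectation, the martingale cross-term vanishes because $U_n$ is $\cF_{t_n}$-measurable and $\EE[\Delta\cB_{M,j}^n]=0$, while $\EE[\Delta\cB_{M,i}^n\Delta\cB_{M,j}^n]=\tau\,\hat{\al}_{M,ij}$ turns the quadratic noise contribution into $\tau\be_1^2\sum_{i,j}\hat{\al}_{M,ij}\,\EE[U_n^T(A^N_i)^TA^N_j U_n]$.

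Since $B^N=\be_0 I$, the left factor is $(I+\tau\La^N)^2$ and the drift term on the right equals $(1-\tau\be_0)^2\||U_n\||_2^2$. I would then reuse the delta-function computation in the proof of Theorem~\ref{Theoremfinite} (which produces the kernel $\hat q_{M,N}$) to bound the noise term by $\tilde\kappa_2\||U_n\||_2^2$, and bound the left-hand side from below using that the smallest eigenvalue of $(I+\tau\La^N)^2$ is $(1+\tau\lambda_1)^2$. Combining these gives the one-step estimate
\[
\||U_{n+1}\||_2^2\le\frac{(1-\tau\be_0)^2+\tau\be_1^2\tilde\kappa_2}{(1+\tau\lambda_1)^2}\,\||U_n\||_2^2\,.
\]

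The main obstacle, and the place where the argument genuinely departs from the unconditionally stable Theorem~\ref{Heattheorem}, is checking that this contraction factor is strictly less than $1$: because $\be_0$ is handled explicitly it now sits in the \emph{numerator} as $(1-\tau\be_0)^2$ instead of strengthening the denominator. Writing $g(\tau)=(1+\tau\lambda_1)^2-(1-\tau\be_0)^2-\tau\be_1^2\tilde\kappa_2$, one has $g(0)=0$ and, after expanding and factoring out $\tau>0$,
\[
\frac{g(\tau)}{\tau}=\big[\,2(\lambda_1+\be_0)-\be_1^2\tilde\kappa_2\,\big]+\tau\big(\lambda_1^2-\be_0^2\big)\,.
\]
Under \eqref{e.2.12} (in the operative form $\be_1^2\tilde\kappa_2<2(\lambda_1+\be_0)$ already used in Theorem~\ref{Heattheorem}) the bracketed term is positive, so the factor is below $1$ for every $\tau>0$ when $\lambda_1\ge|\be_0|$, and for all $\tau$ below an explicit threshold otherwise; this is presumably why the statement, unlike Theorem~\ref{Heattheorem}, does not assert stability for arbitrary stepsize. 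Iterating the one-step contraction then forces $\||U_n\||_2^2\to 0$ as $n\to\infty$, the asserted mean-square (spectral-norm) stability. I expect the It\^o-type expansion and the Parseval/delta-function identity to transfer unchanged from the earlier proofs, so essentially all the new work is concentrated in this scalar comparison of $(1-\tau\be_0)^2+\tau\be_1^2\tilde\kappa_2$ with $(1+\tau\lambda_1)^2$.
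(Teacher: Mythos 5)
The paper gives no proof at all for this theorem (unlike Theorem \ref{Heattheorem}, which is proved in full), so the only benchmark is the paper's own template, and your proposal is precisely the natural adaptation of it — and it is correct. Your one-step identity is right: writing $(I+\tau\La^N)U_{n+1}=(I-\tau B^N)U_n+\be_1\sum_{j}A^N_jU_n\Delta\cB^n_{M,j}$, the cross term vanishes by $\cF_{t_n}$-measurability of $U_n$, the quadratic noise term becomes $\tau\be_1^2\sum_{i,j}\hat{\al}_{M,ij}\EE[U_n^T(A^N_i)^TA^N_jU_n]\le\tau\be_1^2\tilde\kappa_2\||U_n\||_2^2$ by the kernel computation of Theorem \ref{Theoremfinite}, and since $B^N=\be_0 I$ the drift contributes exactly $(1-\tau\be_0)^2\||U_n\||_2^2$, while the left side dominates $(1+\tau\la_1)^2\||U_{n+1}\||_2^2$. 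Where you add genuine value beyond the paper is the scalar comparison $g(\tau)/\tau=\bigl[2(\la_1+\be_0)-\be_1^2\tilde\kappa_2\bigr]+\tau(\la_1^2-\be_0^2)$, which is computed correctly and exposes something the bare statement hides: because $\be_0$ is treated explicitly, the contraction factor is below $1$ for \emph{every} stepsize only when $\be_0\le\la_1$, whereas for $\be_0>\la_1$ one needs $\tau<\bigl[2(\la_1+\be_0)-\be_1^2\tilde\kappa_2\bigr]/(\be_0^2-\la_1^2)$. This restriction is not an artifact of your method: already for a single mode the recursion gives the factor $\bigl[(1-\tau\be_0)^2+\tau\be_1^2 a^2\al\bigr]/(1+\tau\la_1)^2$ exactly, which tends to $\be_0^2/\la_1^2>1$ as $\tau\to\infty$, so the stiff scheme genuinely loses mean-square stability for large stepsize when $\be_0>\la_1$ — consistent with the theorem's omission of the phrase ``for any stepsize'' that appears in Theorem \ref{Heattheorem}. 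One caveat you flag correctly and should keep explicit: as in the paper's proof of Theorem \ref{Heattheorem}, the hypothesis \eqref{e.2.12} is stated with $\kappa$ but the bound produced involves $\tilde\kappa_2$, and since the paper only proves $\kappa\le\tilde\kappa_1$ and $\tilde\kappa_2\le\tilde\kappa_1$, the condition must be read in the operative form $2(\la_1+\be_0)-\be_1^2\tilde\kappa_2>0$ for the argument to close.
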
 

\section{Numerical experiments}\label{s.5} 
In this section, some numerical experiments are performed to visually illustrate the previously claimed
numerical mean-square stability. For simplicity  we take
below 
\[{W}(t,x)=\sum_{j=1}^{\infty}\sqrt{q_j}e_j(x) {B}_j(t),\]
in which ${B}_j(t), j \in \mathbb{N}$ is a sequence of real-valued standard Brownian motions that are
mutually independent on the probability space $(\Omega, \mathcal{F}, \mathbb{P})$ and ${e_j = \sqrt{2} \sin (j\pi x)}$ are an orthonormal basis. Obviously, by a simple calculation, the correlation is given as follows
\[q(x,y)=\sum_{j=1 }^\infty q_j e_j(x) e_j(y). \]
\begin{example}
We consider the following equation
\begin{equation}\label{Heatexampleequation}
	\frac{	\partial u}{\partial  t} (t,x) = \frac{1}{2} \nu \Delta u(t,x)  +\sqrt{\lambda} u(t,x) \dot W(t,x),   ~~~~~x\in [0,1],
\end{equation}
where $\nu, \lambda>0 $ are physical parameter.
The analogue  of equation (\ref{Heatexampleequation}) has been proposed and analyzed in detail as a linearized model for a diffusion in non stationary random media (see \cite{Bertini1999,Carmona1994,Molchanov1991}). It is easy to see that the stability condition of equation (\ref{Heatexampleequation}) through  Theorem  \ref{Heattheorem} is 
\begin{align*}
	-  \nu \lambda_1+\lambda \sum_{j=1}^\infty q_j<0.
\end{align*}
\end{example}
The curves on the left of Figure  \ref{fig:1} are plotted with  $\lambda=\frac{1}{4}, q_j=j^{-1.001}, j=1,\cdots,10$     and  on the right of Figure \ref{fig:1} are plotted $\nu=2, q_j=j^{-1.001}, j=1,\cdots,10$. Figure \ref{fig:1} gives   the impact of the parameters $\nu$ and $\lambda$. Fix  other parameters  and vary only  $\nu$ and $\lambda$, respectively. It is shown that the curve decrease to zero along the time. Moreover, the curves decrease faster for a smaller $\nu$ and larger $\lambda$.
\begin{figure}[!tbp]
\centering
\subfigure[ $\lambda=\frac{1}{4}$ ]{\includegraphics[width=6cm]{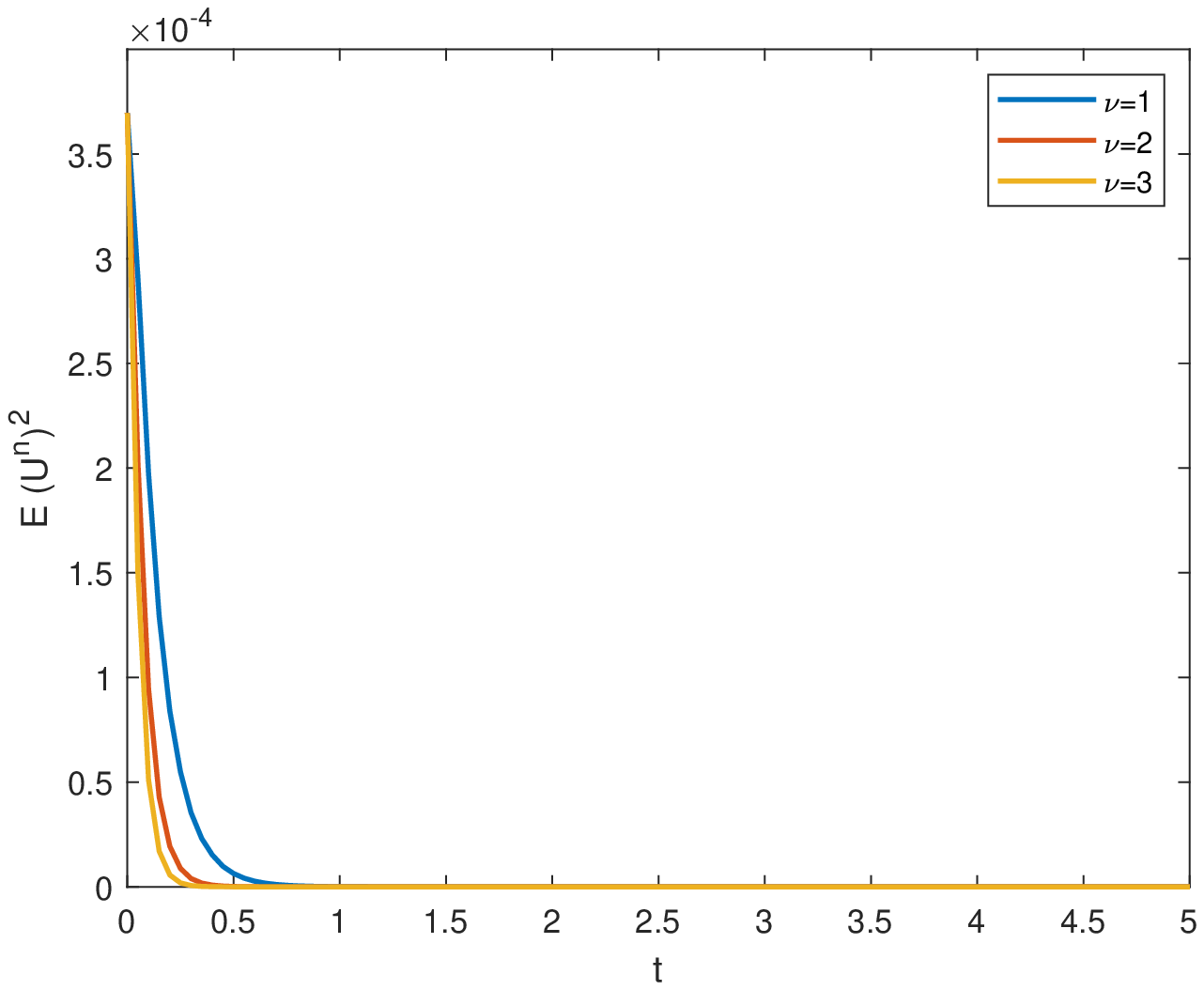}}
\subfigure[ $\nu=2$ ]{\includegraphics[width=6cm]{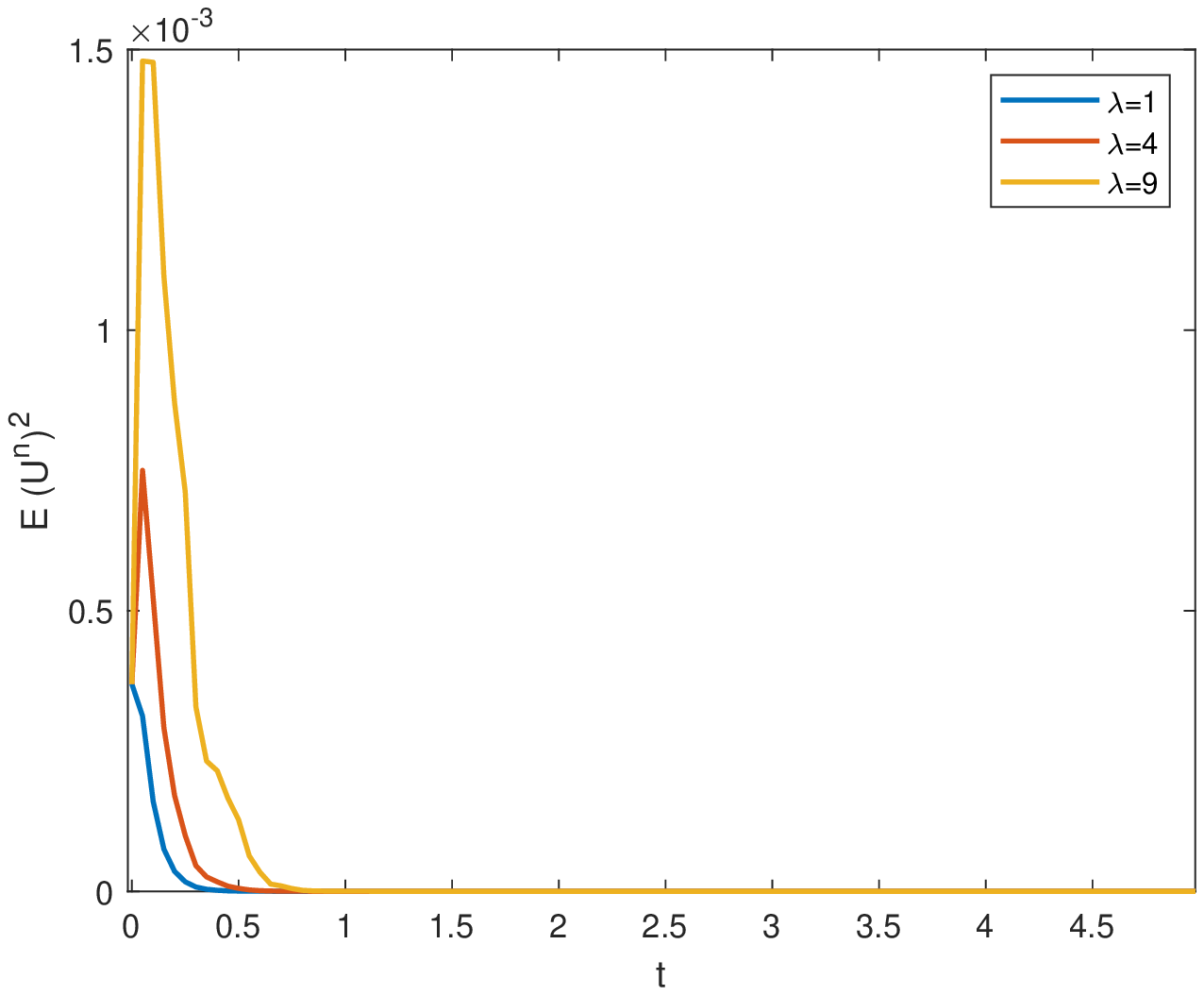}}
\caption{The mean-square curves of equation (\ref{Heatexampleequation}).}
\label{fig:1}
\end{figure}

\begin{example}
We are focus the following test equation
\begin{equation}
	\frac{	\partial u}{\partial  t} (t,x) = \Delta u(t,x) + \beta_0 u(t,x) +\beta_1 u(t,x) \dot W(t,x),   ~~~~~x\in [0,1],
\end{equation}
with the initial condition  $u(0,x)=x(1-x)$.
\end{example}
Since the mean-square stability region of the implicit Euler method depends upon $\beta_0, \beta_1^2\sum\limits_{j}q_j$, by analogy with the standard practice for  stability regions \cite{Higham2000},  we fix $q_j$ and draw these regions in the $x-y$ plane, where $x=\beta_1$ and $y=\beta_0$.

The left-hand and the right-hand pictures in Figure  \ref{figstabilityregion} illustrate the cases $q_j=j^{-1.001}$ and $q_j=j^{-2.001}$, respectively. The analytical mean-square stability region is shown with red color.  Surrounded by yellow solid line is the region where the numerical solutions are stable.
\begin{figure}[!tbp]
\centering
\subfigure[ $q_j=j^{-1.001}$ ]{\includegraphics[width=6cm]{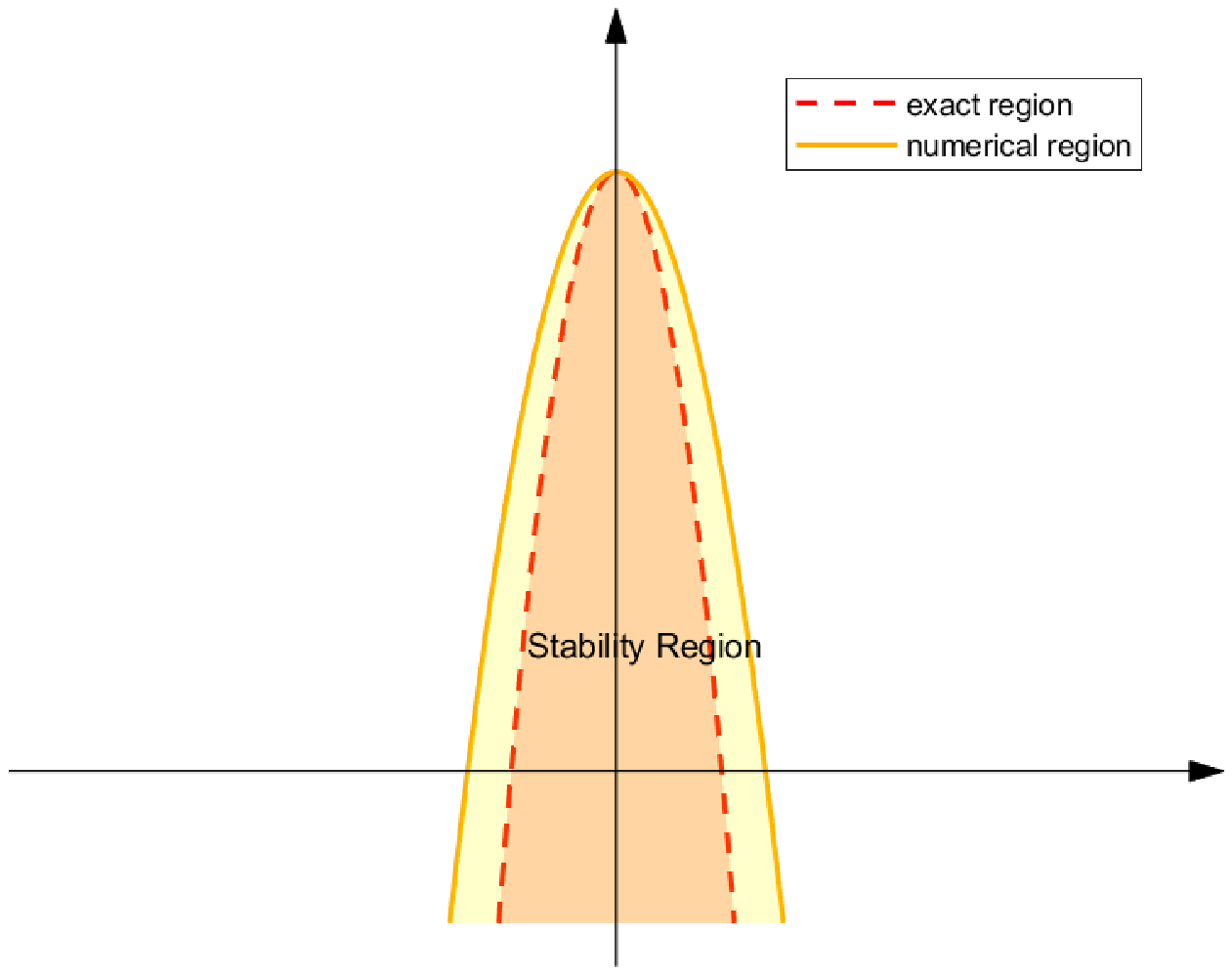}}
\subfigure[$q_j=j^{-3.001}$  ]{\includegraphics[width=6cm]{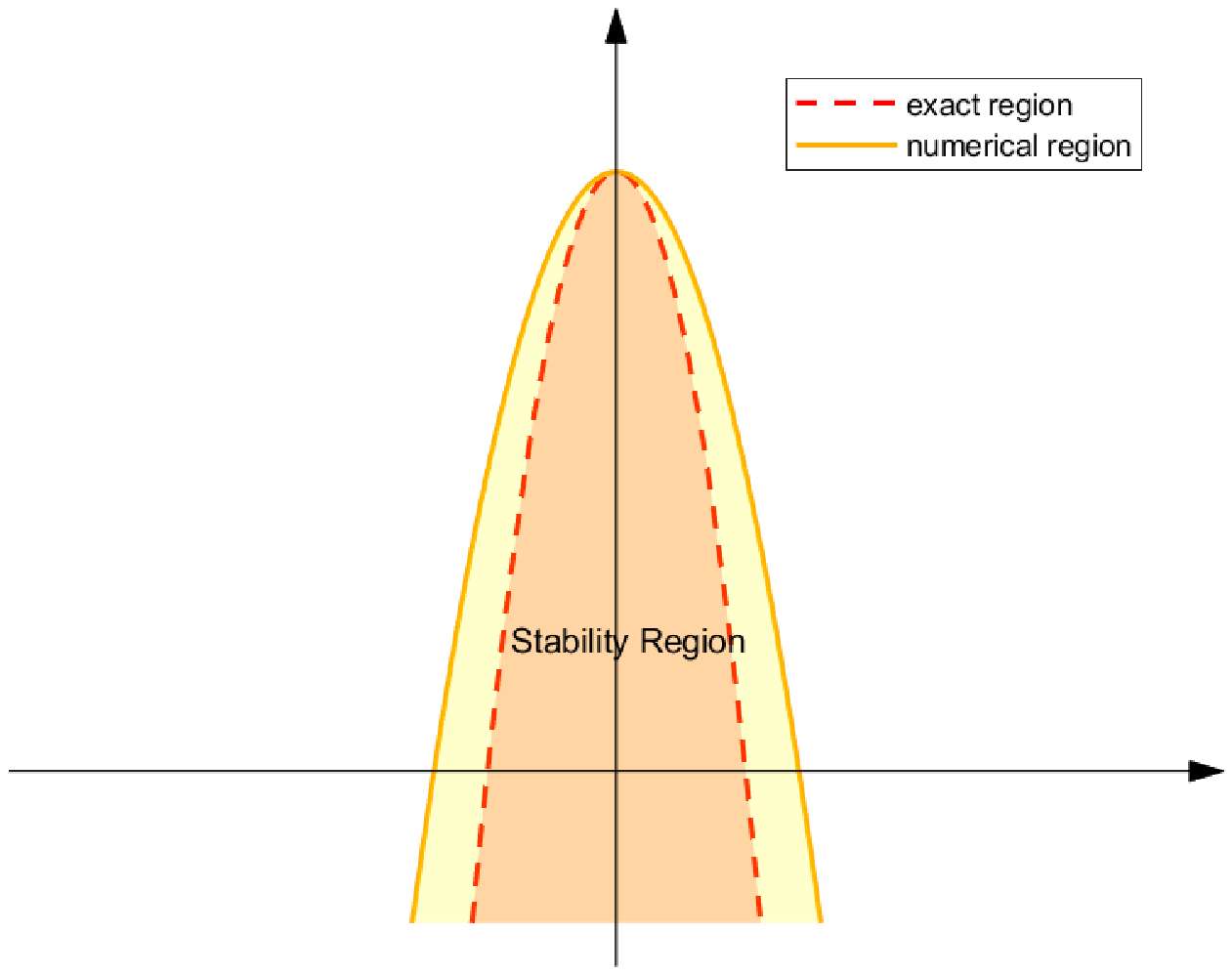}}
\caption{Mean-square stability region of the numerical solutions and the analytical solutions.}
\label{figstabilityregion}
\end{figure}

We choose the parameter with $\beta_0=1, \beta_1=1$ and variable $q_j$ for $j=1,\cdots,100$. It is shown that the curves decrease to zero in Figure \ref{figstability2}. 
\begin{figure}[!tbp]
\centering
\subfigure[ $q_j=j^{-1.001}$ ]{\includegraphics[width=6cm]{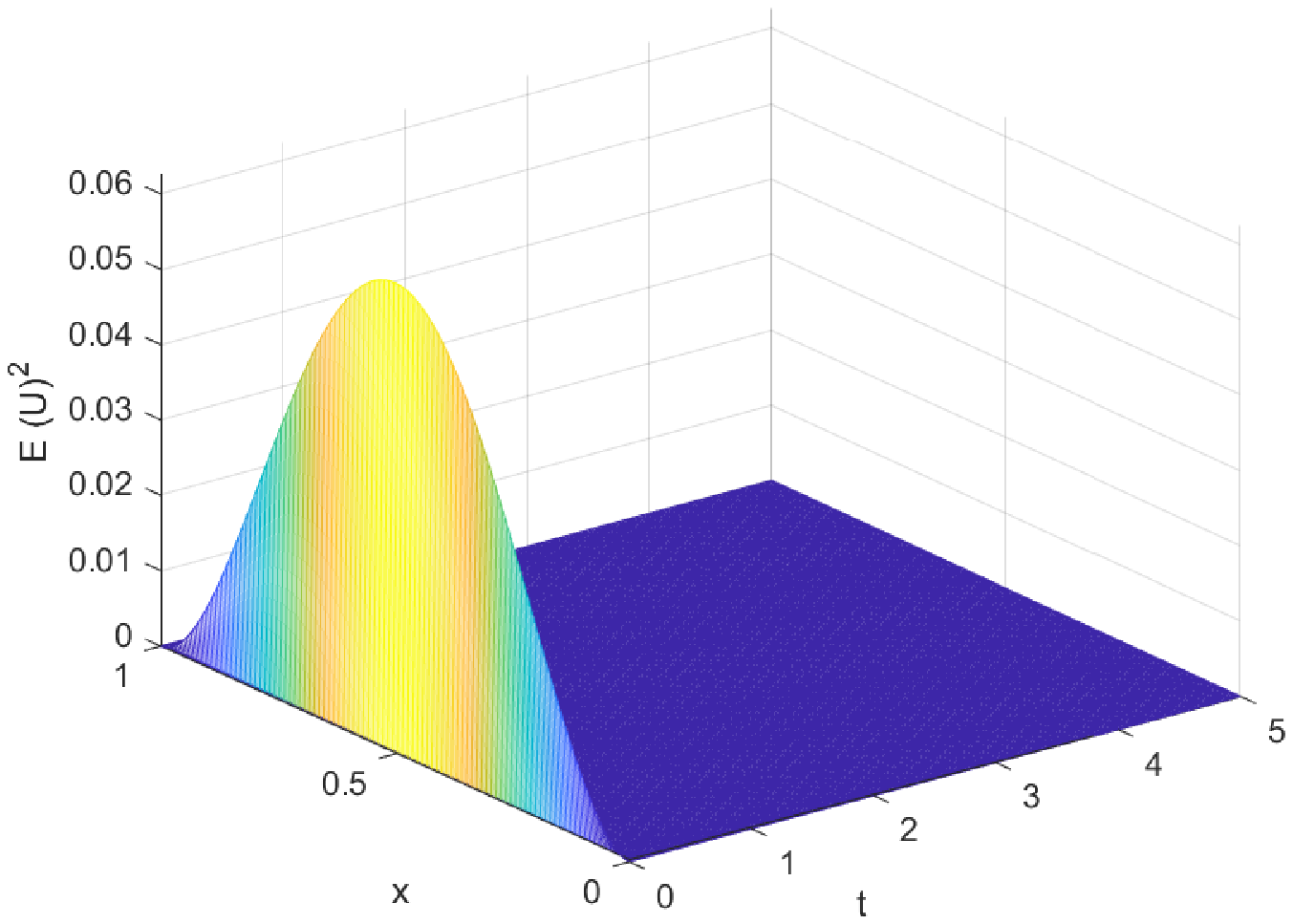}}
\subfigure[$q_j=j^{-3.001}$  ]{\includegraphics[width=6cm]{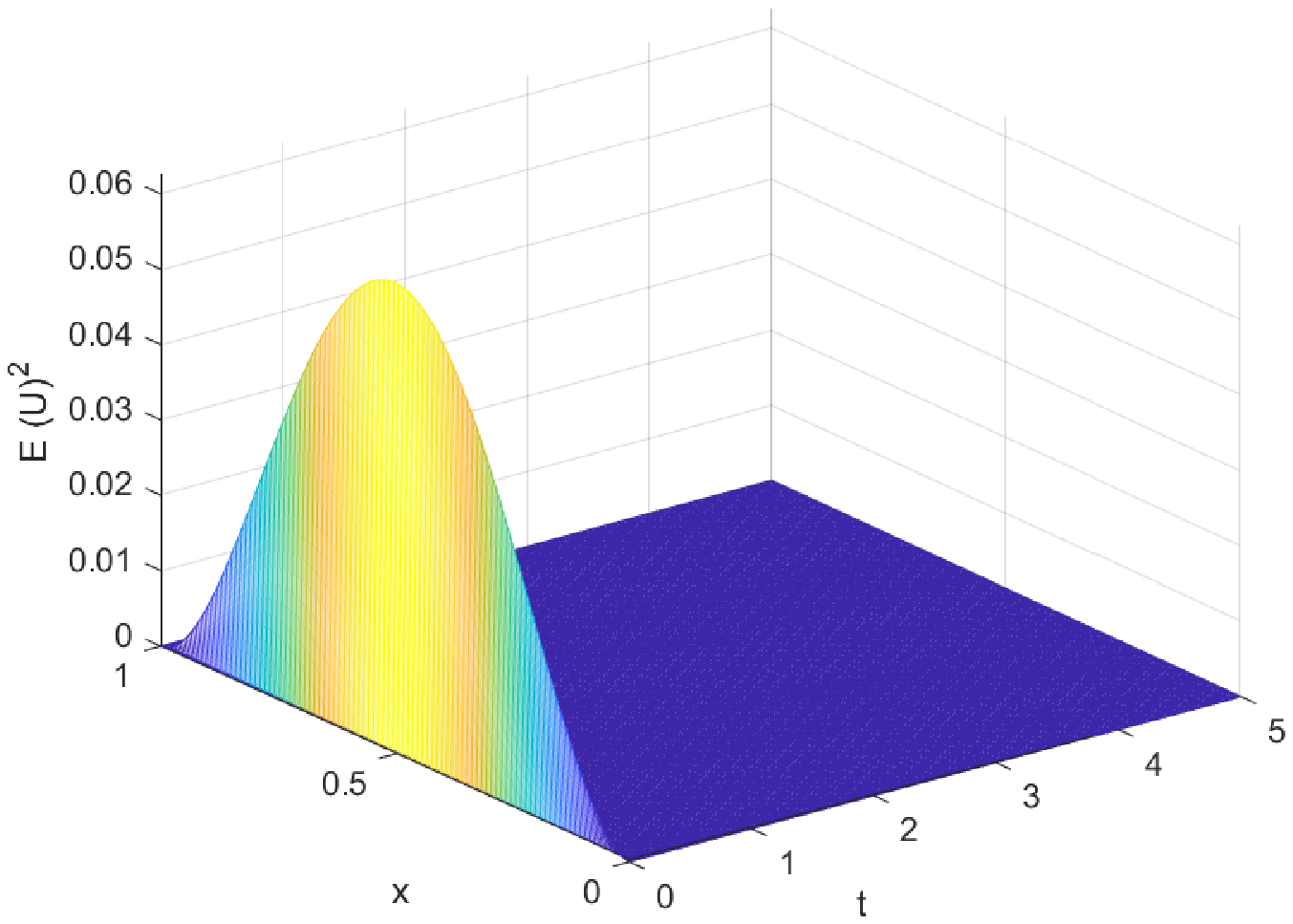}}
\caption{Mean-square curve of numerical solutions with $\beta_0=1, \beta_1=1$.}
\label{figstability2}
\end{figure}
In addition, we are also interested in the impact of different numerical method. We choose $\beta_1=1, q_j=j^{-1.001}, j=1,\cdots,10$.  Figure \ref{figstability3} shows  that the curves  tend to zero along the time. In the upper of Figure \ref{figstability3}, there is no sharp distinction for a smaller $\beta_0$. For a larger $\beta_0$, it is easily see from the lower of  Figure \ref{figstability3}  that a fast downward trend  with implicit Euler method. 
\begin{figure}[!tbp]
\centering
\subfigure[ stiff-implicit Euler method with $\beta_0=0.2$ ]{\includegraphics[width=6cm]{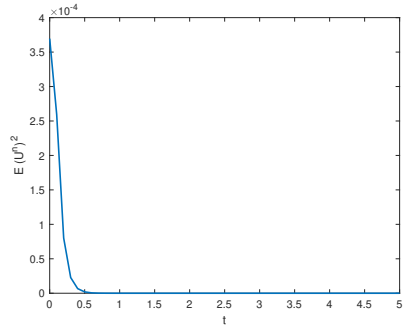}}
\subfigure[implicit Euler method  $\beta_0=0.2$ ]{\includegraphics[width=6cm]{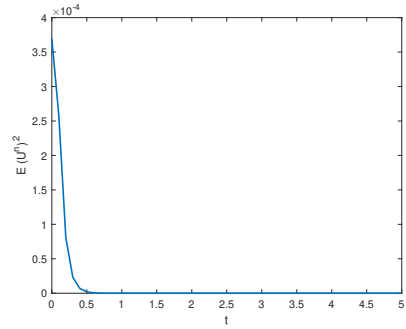}}\\
\subfigure[ stiff-implicit Euler method  $\beta_0=5$ ]{\includegraphics[width=6cm]{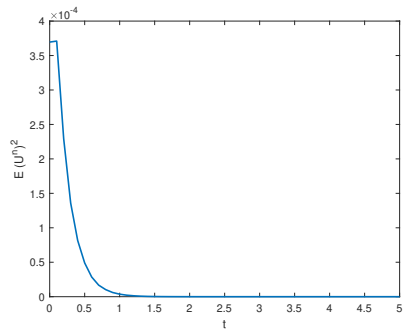}}
\subfigure[implicit Euler method  $\beta_0=5$ ]{\includegraphics[width=6cm]{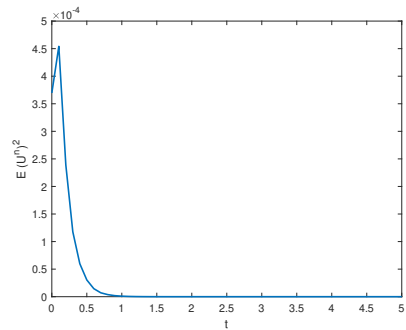}}
\caption{Mean-square curve of numerical solutions with different numerical method.}
\label{figstability3}
\end{figure}

\section[]{Conclusions}
In this article, we   investigate  the long time stability  of  stochastic heat equations  driven by a correlated noise, including the exact solution and its approximations.  By  the eigenfunction expansion method, we transform the SPDE to a system of infinitely many stochastic differential equations and obtain  the  condition on the covariance function for   the exact solutions
to be long time stable. This condition is much better than the existing one and    is also much      easier   to verify.  Furthermore, we substantiate our claim that the solution can be effectively approximated using its finite-dimensional spectral approximation, which, notably, maintains long-time  stability.
It should be mentioned here that we have considered similar problems with a nonlinear stochastic heat equation. But the  extension   to the nonlinear case  is still ongoing.

\section*{Acknowledgment}
Y. Hu is supported by an NSERC Discovery grant   RGPIN-2018-05687   and a centennial  fund from University of Alberta.

\end{document}